\documentclass[preprint, 11pt]{elsarticle}
\usepackage[top=2.3cm, bottom=2.3cm, left=2.6cm, right=2.6cm]{geometry}
\usepackage{array}
\usepackage{float}
\usepackage{units}
\usepackage{multirow}
\usepackage{amsthm}
\usepackage{amsmath}
\usepackage{amssymb}
\usepackage{graphicx}
\usepackage{epsfig}
\usepackage{color}
\usepackage[labelformat=simple]{subcaption}
\usepackage{footnote}
\makesavenoteenv{tabular}
\usepackage{setspace}
\graphicspath{{./Figs/}}

\onehalfspacing

\usepackage{caption}
\captionsetup[table]{format=plain,labelformat=simple,labelsep=period,font=small}%

\makeatletter

\newtheorem{rem}{Remark}
\newtheorem{prop}{Proposition}
\newtheorem{lem}{Lemma}

\newcommand{\ie}{{\emph{i.e.}}}
\newcommand{\eg}{{\emph{e.g.}}}

\makeatother

\begin{document}

\begin{frontmatter}

\title{Energy-efficient Rail Guided Vehicle Routing for\\
Two-Sided Loading/Unloading Automated Freight Handling System}


\author[a]{Wuhua Hu}
\ead{hwh@ntu.edu.sg}
\author[b]{Jianfeng Mao\corref{corr}}
\ead{jfmao@ntu.edu.sg}
\author[c]{Keji Wei}
\ead{keji.wei.th@dartmouth.edu}

\address[a]{School of Electrical and Electronic Engineering, Nanyang Technological University, Singapore}
\address[b]{Division of Systems and Engineering Management,
Nanyang Technological University, Singapore}
\address[c]{Thayer School of Engineering, Dartmouth College, NH, United States}

\cortext[corr]{Corresponding author. Tel: +65 6790 5522; Fax: +65 6792 4062.}

\begin{abstract}
Rail-guided vehicles (RGVs) are widely employed in automated freight handling system (AFHS) to transport surging air cargo. Energy-efficient routing of such vehicles is of great interest for both financial and environmental sustainability. Given a multi-capacity RGV working on a linear track in AFHS, we consider its optimal routing under two-sided loading/unloading (TSLU) operations, in which energy consumption is minimized under conflict-avoidance and time window constraints. The energy consumption takes account of routing-dependent gross weight and dynamics of the RGV, and the conflict-avoidance constraints ensure conflict-free transport service under TSLU operations. The problem is formulated as a mixed-integer linear program, and solved by incorporating valid inequalities that exploit structural properties of the problem. The static problem model and solution approach are then integrated with a rolling-horizon approach to solve the dynamic routing problem where air cargo enters and departs from the system dynamically in time. Simulation results suggest that the proposed strategy is able to route an RGV to transport air cargo with an energy cost that is considerably lower than one of the most common heuristic methods implemented in current practice.
\end{abstract}

\begin{keyword}
Vehicle routing; energy efficiency; two-sided loading/unloading operations; pickup and delivery; conflict avoidance; time window constraints; multiple capacity
\end{keyword}

\end{frontmatter}

\section{Introduction}

Automated freight handling system (AFHS) is a type of automated material handling system (AMHS), which is widely adopted in facilities with massive material handling requests, such as freight terminals, distribution centers and production plants, to enhance system efficacy by minimizing operating cost and risk of human errors. As operating cost of AMHS can represent up to 70\% of the cost of a product \citep{liu2002design,giordano2008integrated}, it is critical to smartly design and operate AMHS to improve the overall economic and environmental performance. There has been a considerable growth of interest in studying such problems in both industrial and academic contexts.

This work considers improving AFHS installed in a freight terminal, which employs railed-guided vehicles (RGVs) to transport a tremendous amount of inbound and outbound cargo from their origins to destinations distributed along a linear track. The workload, which is especially high at its peak hour around midnight, leads to a conflict-prone environment that poses great challenges to terminal operations. The present AFHS has been developed to improve the terminal's throughput while eliminating potential human errors. However, the design is not optimal especially in terms of energy efficiency. As energy consumption constitutes one of major sources of operating cost and has gained increasing attention for enabling a greener and sustainable earth, improving energy efficiency of AFHS and in particular developing an energy-efficient RGV routing strategy is of great interest to the industry. So far only heuristic methods have been employed to route RGVs in the current system, which motivates us to develop a rigorous mathematical programming method to improve the system performance. The new routing strategy also meets various service requirements such as delivery time windows (TWs) and avoidance of unloading deadlocks, etc., by incorporating them into the mathematical program from which the strategy results. This is contrast to existing methods which achieve that by abruptly compromising the system's performance or by means of a posteriori sophisticated supervisory control \citep{giordano2008integrated}.

\subsection{Problem description}

A typical work area of the AFHS under consideration is depicted in Figure \ref{fig: toy example - scenario}. An RGV is operated over a linear track to transport containers between work stations located along both sides of the track. Containers are queued at work stations, and will be picked up and delivered to their destined stations on either sides of the track by the RGV via so-called \emph{two-sided loading/unloading} (TSLU) operations. The RGV and work stations are equipped with roller decks to support the TSLU operations. When the RGV is docked to a station, the roller decks rotate forward or backward accordingly to load or unload containers from or to either side of the track. The RGV can carry multiple containers subject to certain capacity limit.

Each transportation of a container is initiated with a pickup and delivery (PD) request to the central control system. Midway drop-off is not allowed in the current practice because of the substantial overhead caused by frequent acceleration and deceleration of the RGV. Once a container is picked up, it remains on the RGV until being delivered to its destination. As containers enter and depart from the AFHS dynamically, the control system accumulates unfinished PD requests and aims at routing the RGV to pick up and deliver the associated containers in an optimal sequence so as to minimize energy consumption required for completing all transport requests subject to service quality and conflict-avoidance constraints for smooth operations.

\begin{figure}[H]
\begin{centering}
\includegraphics[scale=0.9]{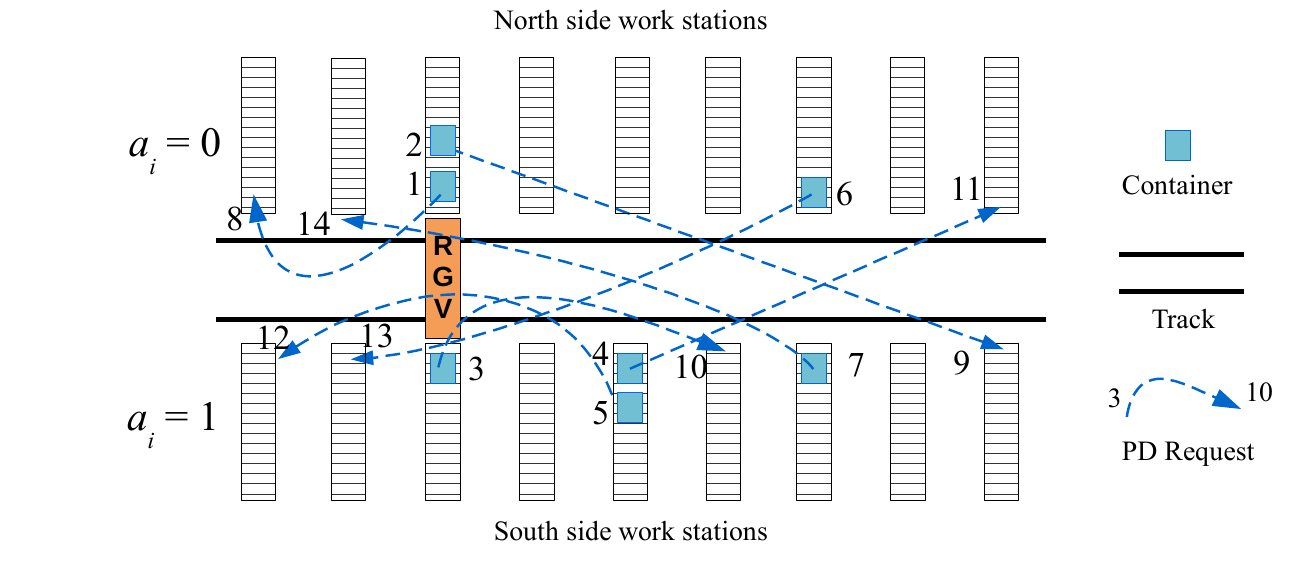}
\par
\end{centering}
\caption{Typical work area of an RGV in AFHS with exemplary PD requests.\label{fig: toy example - scenario}}
\end{figure}

\subsection{Related literature}

Sequencing PD tasks to be handled by a single vehicle is referred to as vehicle routing or job sequencing \citep{vis2006survey,roodbergen2009survey,carlo2014storage}, and can be treated as a pickup and delivery problem (PDP) \citep{berbeglia2007static,parragh2008survey}. The problem is NP-hard in general due to a complex combinatorial nature, and it includes our problem under consideration as a sophisticated case. Our problem is further complicated by dynamic arrivals of PD requests. So far there have been a variety of research investigations on static/dynamic vehicle routing problems (VRPs) for different applications \citep{psaraftis1988dynamic,berbeglia2010dynamic,pillac2013review}. The literature confined to a single vehicle is briefly reviewed as follows.

Static VRPs assume that all transport requests are known a priori. Atallah and Kosaraju \citep{atallah1987efficient} proved that the problem is polynomial-time solvable if the vehicle has unit capacity while confined to a linear track when no precedence and TW constraints are imposed on the transport requests. The same problem turns out to be NP-hard if the track topology changes to be a tree (or general graph) \citep{frederickson1993nonpreemptive,frederickson1976approximation}. If the vehicle has multiple capacity, the problem is NP-hard even if the track is a simple path \citep{guan1998routing}. Another closely related problem, the PDP (for goods transportation) or the dial-a-ride problem (DARP, for passenger transportation) which includes TW constraints on PD requests has been well studied. Algorithms based on branch-and-cut or column generation are available for solving the problem of a small to medium size \citep{parragh2008survey,cordeau2008recent}. Other related literature is on request/job sequencing in automated storage and retrieval systems (AS/RS), as referred to \citep{roodbergen2009survey} for a recent review.

The aforementioned literature all considered PDPs without loading constraints. In many applications, however, constraints also appear on loading \citep{iori2010routing}. In the \emph{traveling salesman problem with pickup and delivery and LIFO loading} (TSPPDL), a single vehicle must serve paired PD requests while both pickup and delivery must be performed in LIFO (last in first out) order. Heuristic algorithms for solving this problem were introduced in \citep{ladany1984optimal,carrabs2007variable}, and exact formulations and solutions were reported in \citep{carrabs2007additive,cordeau2010branch} which rely on tailored branch-and-cut algorithms. Another related problem is the \emph{traveling salesman problem with pickup and delivery and FIFO loading} (TSPPDF) in which both pickup and delivery must be performed in FIFO (first in first out) order. Heuristic algorithms were introduced in \citep{erdougan2009pickup} for solving the problem, and exact solutions were explored by using additive branch-and-bound \citep{carrabs2007additive} and tailored branch-and-cut \citep{cordeau2010branchFIFO}, respectively.

In practice, VRPs are dynamic in nature as PD requests appear stochastically in time. Research work on dynamic VRPs assumes the requests arrive in an unknown or known stochastic process \citep{berbeglia2010dynamic,pillac2013review}. With an unknown arrival process, deterministic algorithms have been proposed to treat dynamic VRPs and their performances are evaluated by competitive analysis (which compares the worst-case cost achieved by the algorithm for a dynamic problem and the optimal cost obtained for its counterpart having all requests being known a priori) \citep{ascheuer2000online,feuerstein2001line,berbeglia2010dynamic}. However, the algorithms and their analyses all rely on the assumption that the dynamic VRPs do not have side constraints like loading, precedence and TW constraints. On the other hand, if the arrival process of PD requests is known, strategies based on sampling or Monte Carlo simulations are often used to handle dynamic VRPs \citep{berbeglia2010dynamic}. Rolling-horizon based algorithms can also be employed if deterministic estimates of future requests are available \citep{psaraftis1988dynamic}. Alternatively, heuristic algorithms may be developed by exploiting structural properties of a problem, as referred to \citep{berbeglia2010dynamic} for a relevant review.

In particular, among existing literature, \eg, \citep{lau2006joint,ou2010scheduling,tang2010improving,Hu2012IEEM}, investigating operational aspects of AFHS (whereas investigations on higher-layer issues can be referred to \citep{derigs2013air} and the references therein), \citep{Hu2012IEEM} studied a most relevant but simpler problem, in which the RGV is assumed to have unit capacity and the goal is to minimize RGV's travel distance for completing all transport requests. The problem is a dynamic PDP with TW, FIFO queuing, and PD precedence constraints, and the investigation forms a pilot study towards obtaining a model and solution for the more complex routing problem considered in this work, of which partial results were reported in \citep{hu2013energy}.

This work differs from existing literature in several aspects. \emph{Firstly}, the problem under investigation is a capacitated PDP under unique conflict-free service constraints. We completely characterize these service constraints under TSLU operations, which include the well-known LIFO and FIFO service constraints as two special cases. This is the first time that such kind of characterization has become available for transport service under TSLU operations, to the best of our knowledge. \emph{Secondly}, the RGV routing problem aims to minimize total energy consumption of operating an RGV for completing all PD requests, which meets well with the interest of saving energy and reducing carbon emission in practice. This differs from the existing literature where travel distance or makespan (\ie, task completion time) is taken as the objective \citep{berbeglia2007static,parragh2008survey,xin2014energy}.  \emph{Thirdly}, structural properties of the new problem are exploited to reduce the problem domain and derive useful valid inequalities for improving the computational efficiency. \emph{Fourthly}, a rolling-horizon approach is developed for treating the dynamic RGV routing problem, where a way of handling non-zero initial conditions (\ie, the RGV starts with nonempty load) is introduced and explained in detail. The new issues revealed and solved all explain the challenges of enabling energy-efficient routing of an RGV confronted in a real AFHS.

The rest of the work is organized as follows. Section \ref{sec: problem formulation} perceives the static RGV routing problem as a sophisticated PDP problem, characterizes its conflict-free service requirements, and develops a full optimization model for it. Section \ref{sec: solution approach} reformulates the initial model into a mixed-integer linear program (MILP), reduces its domain by removing infeasible solutions, and derives valid inequalities from the problem structure for expediting the solution process. Section \ref{sec: rolling-horizon-appro} presents a rolling-horizon approach to treat the dynamic problem, as followed by comprehensive computational studies performed on random instances in Section \ref{sec: computational studies}. Finally, conclusions are drawn in Section \ref{sec: conclusions}. Supporting materials that are helpful to understand the main results are collected in the Appendices.

\section{Static Routing Problem Formulation} \label{sec: problem formulation}

A multi-capacity RGV is routed to serve $n$ PD requests with TW constraints on the deliveries. As midway drop-off is not allowed in current practice, fulfilling a PD request is equivalent to completing two tasks, pickup task (\ie, loading a container from its origin) and delivery task (\ie, unloading the container at its destination). Therefore, serving $n$ PD requests is equivalent to completing $2n$ tasks and the optimal routing solution is a processing order of the $2n$ tasks for the RGV to consume least energy under service quality and feasibility constraints.

To facilitate problem formulation, we assign a unique integer ID to each task. A pickup task is associated with an integer $i$, where $1\le i \le n$, and its corresponding delivery task associated with $i+n$. So the $i$th PD request means a pair of directed tasks $i\dashrightarrow i+n$, where $\dashrightarrow$ means a simple path that may contain multiple arcs. We denote the sets of pickup and delivery tasks as $P$ and $D$, respectively, \ie, $P = \{1, 2, \dots, n\}$ and $D = \{n+1, n+2, \dots, 2n\}$. In the example shown in Figure \ref{fig: toy example - scenario}, there are seven PD requests indicated by dashed arrows, in which $P=\{1, 2, \dots,7\}$, $D=\{8, 9, \dots, 14\}$ and the PD requests correspond to seven pairs of PD tasks $\{1\dashrightarrow 8, 2\dashrightarrow 9, \dots, 7\dashrightarrow 14\}$. For convenience, we refer to the PD requests by their pickup tasks, as $P$.

The static RGV routing problem is then modeled by
a directed graph $G=(V, A)$, where a vertex $i$ in $V$ represents
a pickup or delivery task with the ID of $i$ and an arc $(i,j)$ in $A$ represents
a plausible processing order between the two tasks $i$ and $j$,
\ie, whether the task $j$ could be processed right after the task $i$. Two virtual tasks, $0$ and $2n+1$, are added to represent the start and end positions of the RGV, respectively. Define $V' = P \cup D = \{1,2,\dots,2n\}$ and $V = \{0\} \cup V' \cup \{2n+1\}$. The arcs $(i,j)$ for all $i,j\in V'$ and $i,j\in V$ give rise to arc sets $A'$ and $A$, respectively (both of which can be reduced by removing infeasible arcs in Section \ref{subsec: arc_set_reduction}).

A feasible routing solution is then a path starting from vertex $0$, going through all vertices in $V'$ exactly once and ending at vertex $2n+1$. While a path can generally be formulated by a sequence of binary decision variables $x_{ij}$ which indicate the arcs $(i,j)$ used in the path, another group of binary decision variables $y_{ij}^k$, which indicate the arcs traversed during the service of each request $k$ is also required for enforcing conflict-free service constraints as revealed next. The new problem and its unique model differentiate our work from existing literature.

\subsection{Conflict-free service constraints}

In practice there are physical constraints, such as containers on an RGV cannot swap positions, that restrict the TSLU operations. The multi-capacity RGV thus must be routed to load and unload containers in a valid order in order to avoid infeasible operations and deadlock. Here, \emph{infeasible operation} means an operation that attempts to unload a container before another one which is however impossible to realize in practice for physical constraints; and \emph{deadlock} means that a container cannot be unloaded without temporally dropping off another container and it remains so if the order of unloading the two containers is reversed. The requirements are equivalent to imposing conflict-free service constraints on the TSLU operations. Before describing these constraints, we classify all PD requests to be handled by the TSLU operations into four types:
\begin{align}
P^{1} & \triangleq \{i\in P:\,a_{i}=a_{i+n}=0\},\,P^{2}\triangleq\{i\in P:\,a_{i}=a_{i+n}=1\},\nonumber \\
P^{3} & \triangleq \{i\in P:\,a_{i}=0,\,a_{i+n}=1\},\,P^{4}\triangleq\{i\in P:\,a_{i}=1,\,a_{i+n}=0\},\label{eq: PD types}
\end{align}
where $a_i$ is an indicator variable to show on which side task $i$ occurs, and it indicates the north side if $a_i = 0$ and the south side otherwise. The four types of PD requests are illustrated in Figure \ref{fig:Four-types-of PD requests}. For the example shown in Figure \ref{fig: toy example - scenario}, we have $P^1 = \{1\}$, $P^2 = \{3,5\}$ $P^{3} = \{2,6\}$ and $P^4 = \{4, 7\}$.
\begin{figure}[H]
\begin{centering}
\includegraphics[scale=0.6]{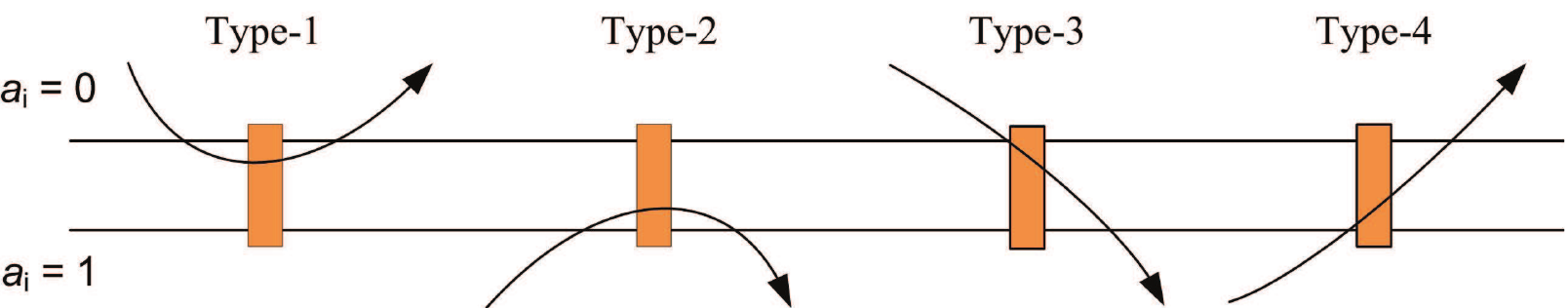}
\par\end{centering}
\caption{Four types of PD requests.\label{fig:Four-types-of PD requests}}
\end{figure}

Various conflicts may arise if tasks belonging to different types of PD requests are processed sequentially. The types of conflicts reviewed in the literature (\eg, \citep{cordeau2010branch,cordeau2010branchFIFO,erdougan2009pickup}) can be regarded as loading/unloading systems restricted to one type of the PD requests defined above. Thus, their conflict-free service constraints can be satisfied by sticking to a certain task service order: If PD requests are all of Type-1 (or Type-2), the LIFO service order will suffice; and if PD requests are all of Type-3 (or Type-4), the FIFO service order will do.

Under TSLU operations, however, a single LIFO or FIFO service order cannot guarantee conflict-free services. As there are four mixed types of PD requests, we will face 10 groups of scenarios for serving different combinations of PD requests. Among them, one group is always conflict-free in which tasks of different types of requests can be served in any order, and that the rest 9 groups have to be constrained to avoid conflicts. Specifically, the conflict-free scenarios are concerned with service of PD request pairs $P^1 \sim P^2$, and the rest 9 groups of scenarios are concerned with service of PD request pairs $P^1 \sim P^1/P^3/P^4$, $P^2 \sim P^2/P^3/P^4$, $P^3 \sim P^3/P^4$, $P^4 \sim P^4$, where the symbol / means ``or''. Different constraints may be imposed in the 9 groups of scenarios to ensure conflict-free services.

The ten groups of scenarios and their associated service constraints can be classified into six cases below. The first two cases correspond to conventional scenarios with LIFO or FIFO loading restrictions, and the next three cases are unique to the routing problem under consideration, and the last case describes conflict-free scenarios where no special service constraint is required.

\subsubsection{Case 1. $P^1\sim P^1$ and $P^2\sim P^2$: LIFO service}

As illustrated in Figure \ref{fig:LIFO}, LIFO service order should be maintained between any pair of PD requests in $P^1$ (or $P^2$ alike) for conflict avoidance. Given requests $j,k \in P^1$ which share the RGV for a certain period of time, if the pickup task $j$ is processed after the pickup task $k$, then the delivery task $j+n$ must be processed before the delivery task $k+n$.

\begin{figure}[H]
\begin{centering}
\includegraphics[scale=0.6]{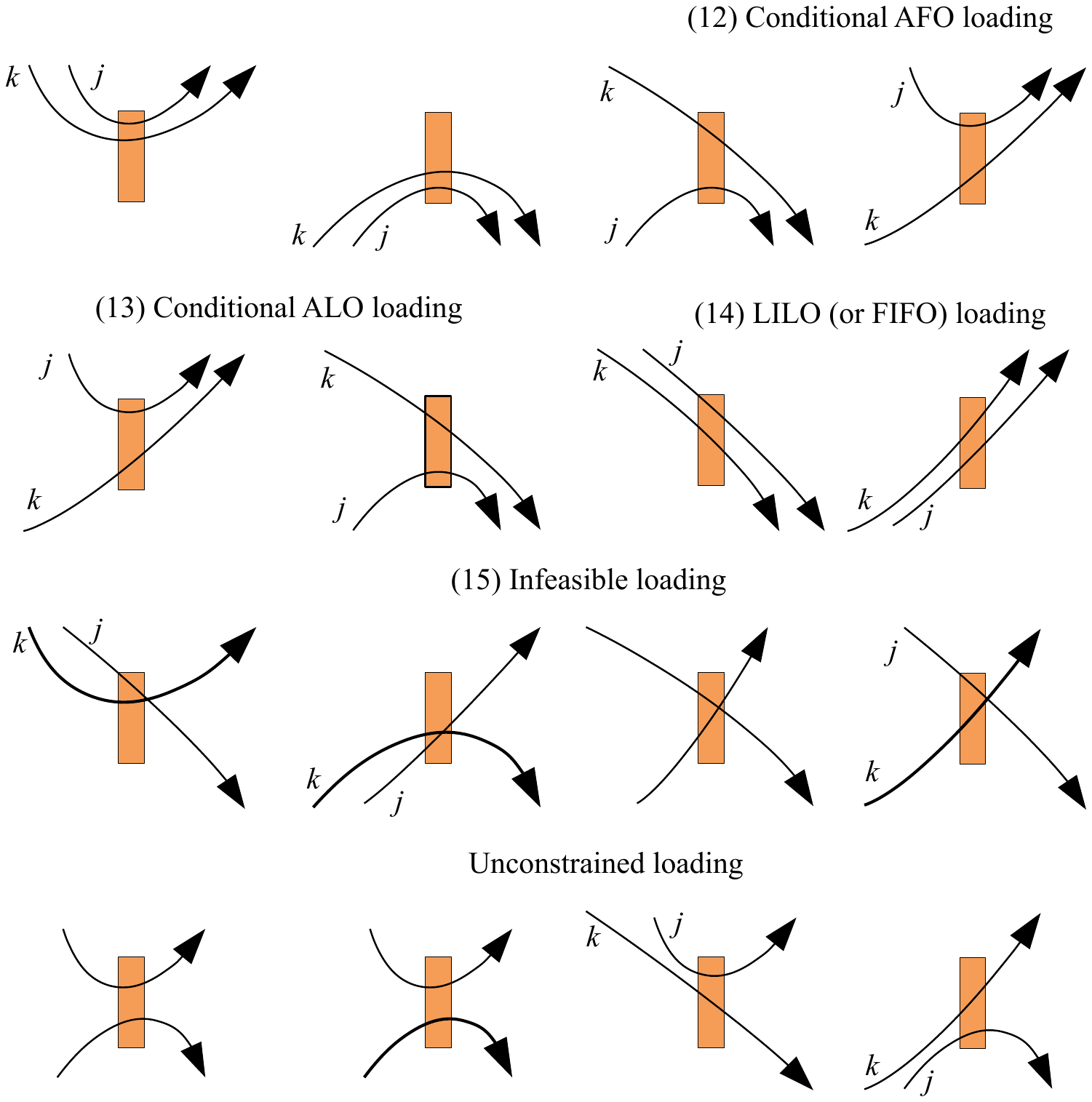}
\par\end{centering}
\caption{LIFO service: $P^1\sim P^1$ and $P^2\sim P^2$. \label{fig:LIFO}}
\end{figure}

To characterize this mathematically, we introduce binary decision variables $y_{ij}^k$ for all request $k\in P$ and arc $(i,j)$ in a feasible set. We have $y_{ij}^k = 1$, if arc $(i,j)$ is traversed on the path from vertex $k$ to vertex $k+n$ (\ie, during the service for request $k$); and $y_{ij}^k = 0$ otherwise. The LIFO service order is enforced by the following constraints:
\begin{equation}
\sum_{i: (i,j)\in A'}y_{ij}^k=\sum_{i: (i,j+n)\in A'}y_{i,j+n}^k, \quad
\begin{cases}
\forall   j\in P^{1}\backslash\{k\}, k\in P^{1},\\
\forall   j\in P^{2}\backslash\{k\}, k\in P^{2}.
\end{cases}\label{eq:LIFO}
\end{equation}
Constraint \eqref{eq:LIFO} means that if task $j$ is processed between $k$ and $k+n$, then task $j+n$ must also be processed between $k$ and $k+n$, namely, the LIFO service order is implemented.

\subsubsection{Case 2. $P^3\sim P^3$ and $P^4\sim P^4$: FIFO service}

Similarly, FIFO service order should be maintained between any pair of PD requests in $P^3$ (or $P^4$ alike) for conflict avoidance, as illustrated in Figure \ref{fig:LIFO}. This requirement can be met by enforcing constrain (\ref{eq:FIFO}) below, which means that if a pickup task of request $k$ in $P^3$ (or $P^4$) is processed before the service of request $j$ of the same type, then the delivery task of request $k$ must be processed before the completion of request $j$, \ie, the FIFO service order is enforced.

\begin{figure}[H]
\begin{centering}
\includegraphics[scale=0.6]{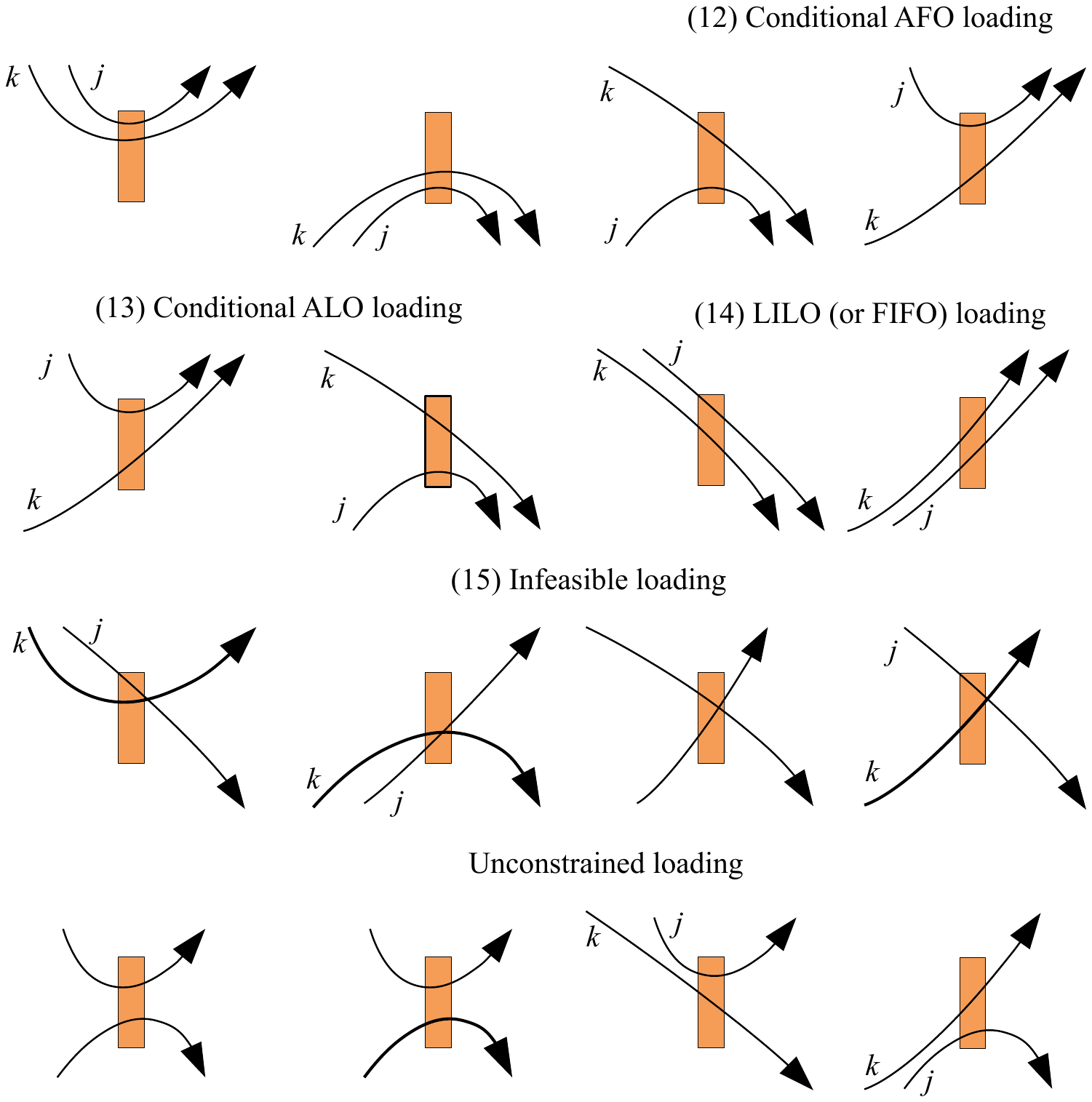}
\par\end{centering}
\caption{FIFO service: $P^3\sim P^3$ and $P^4\sim P^4$. \label{fig:FIFO}}
\end{figure}

\begin{equation}
\sum_{i: (i,j)\in A'}y_{ij}^k+\sum_{i: (i,j+n)\in A'}y_{i,j+n}^k\le1,\quad \begin{cases}
\forall   j\in P^{3}\backslash\{k\}, k\in P^{3},\\
\forall   j\in P^{4}\backslash\{k\}, k\in P^{4}.
\end{cases}\label{eq:FIFO}
\end{equation}

\subsubsection{Case 3. $P^1\sim P^3$ and $P^2\sim P^4$: crossing first in (CFI) service}

This case is illustrated in Figure \ref{fig:Crossing-First-In}. Assume $j \in P^3, k \in P^1$ and they are simultaneously served by an RGV in a certain time interval. The PD request $j$ is a crossing request whose pickup and delivery locations are on opposite sides of the track. If the pickup task $j$ is processed after the pickup task $k$ but before the delivery task $k+n$, then the delivery task $j+n$ cannot be performed because the container associated with the request $k$ will block the way out. Thus, in this case, the crossing request must be handled first for ensuring conflict-free service, leading to the so-called CFI service order. This applies to the case for $j\in P^{4}, k\in P^{2}$ alike.

\begin{figure}[H]
\begin{centering}
\includegraphics[scale=0.6]{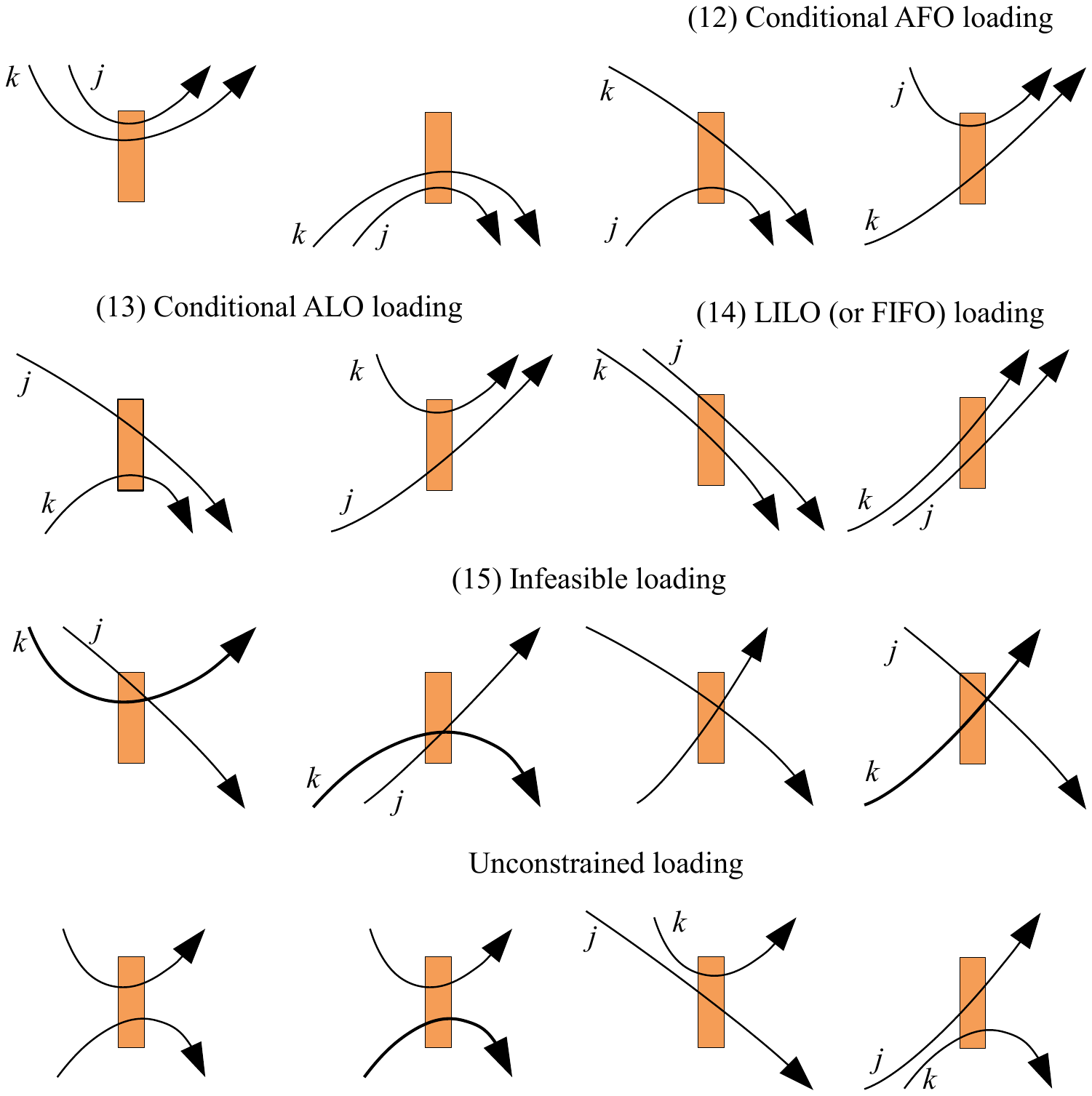}
\par\end{centering}
\caption{CFI service: $P^1\sim P^3$ and $P^2\sim P^4$. \label{fig:Crossing-First-In}}
\end{figure}

The CFI service order can be enforced by the following constraints:
\begin{equation}
y_{ij}^k = 0, \forall (i,j)\in A',\quad  \begin{cases}
\forall   j\in P^{3}, k\in P^{1},\\
\forall   j\in P^{4}, k\in P^{2}.
\end{cases}\label{eq:CFI}
\end{equation}
Also note that, for $j\in P^{1}, k\in P^{3}$ (or $j\in P^{2}, k\in P^{4}$), we can obtain another set of conflict-free service constraints, $\sum_{i: (i,j+n)\in A'}y_{i,j+n}^k \le \sum_{i: (i,j)\in A'}y_{ij}^k$, meaning that if task $j+n$ is processed between tasks $k$ and $k+n$, then task $j$ must also have been processed. As shown in Appendix \ref{apdix: CFI-alternative}, this set of constraints are equivalent to the constraints in (\ref{eq:CFI}) and hence can be ignored.

\subsubsection{Case 4. $P^1\sim P^4$ and $P^2\sim P^3$: crossing last out (CLO) service}

Similarly, CLO service order should be maintained in Case 4 as illustrated in Figure \ref{fig:Crossing-Last-Out}. Assume that $j \in P^4, k \in P^1$ and they are simultaneously served by the RGV in a certain time interval. Request $j$ is still the crossing request. This time, the pickup tasks $j$ and $k$ can be processed in a flexible order, but the delivery task $j+n$ must be processed after the delivery task $k+n$ because the container associated with request $j$ is queued behind the one associated with request $k$ on the RGV.

\begin{figure}[H]
\begin{centering}
\includegraphics[scale=0.6]{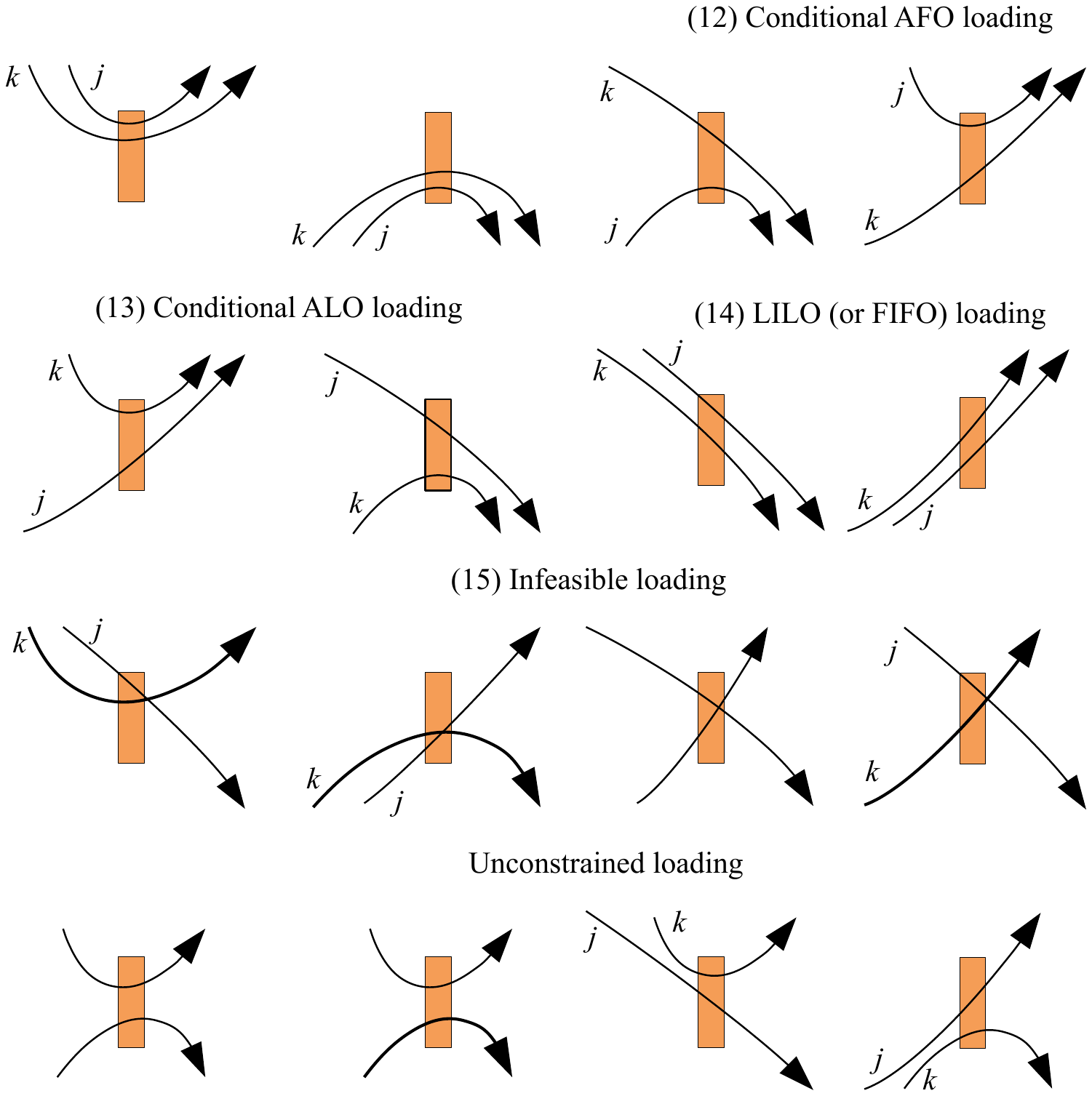}
\par\end{centering}
\caption{CLO service: $P^1\sim P^4$ and $P^2\sim P^3$. \label{fig:Crossing-Last-Out}}
\end{figure}

The CLO service order can be enforced by the following constraints:
\begin{align}
y_{i,j+n}^k=0, \forall (i,j+n)\in A', &\quad \begin{cases}
\forall   j\in P^{4}, k\in P^{1},\\
\forall   j\in P^{3}, k\in P^{2}.
\end{cases} \label{eq:CLO}
\end{align}
As in the CFI case, for all $j\in P^{1}, k\in P^{4}$ (or $j\in P^{2}, k\in P^{3}$) we can obtain another set of conflict-free service constraints, $\sum_{i: (i,j)\in A'}y_{ij}^k \le \sum_{i: (i,j+n)\in A'}y_{i,j+n}^k$, which are equivalent to the constraints in (\ref{eq:CLO}) and hence can be ignored. A proof of this fact is again referred to Appendix \ref{apdix: CFI-alternative}.

\subsubsection{Case 5. $P^3\sim P^4$: Deadlock}

This case corresponds to the deadlock case for any pair of PD requests $P^3\sim P^4$ as illustrated in Figure \ref{fig:Deadlock}. Such pair of PD requests should never be simultaneously served by the RGV because the corresponding containers will block their way out from each other.

\begin{figure}[H]
\begin{centering}
\includegraphics[scale=0.6]{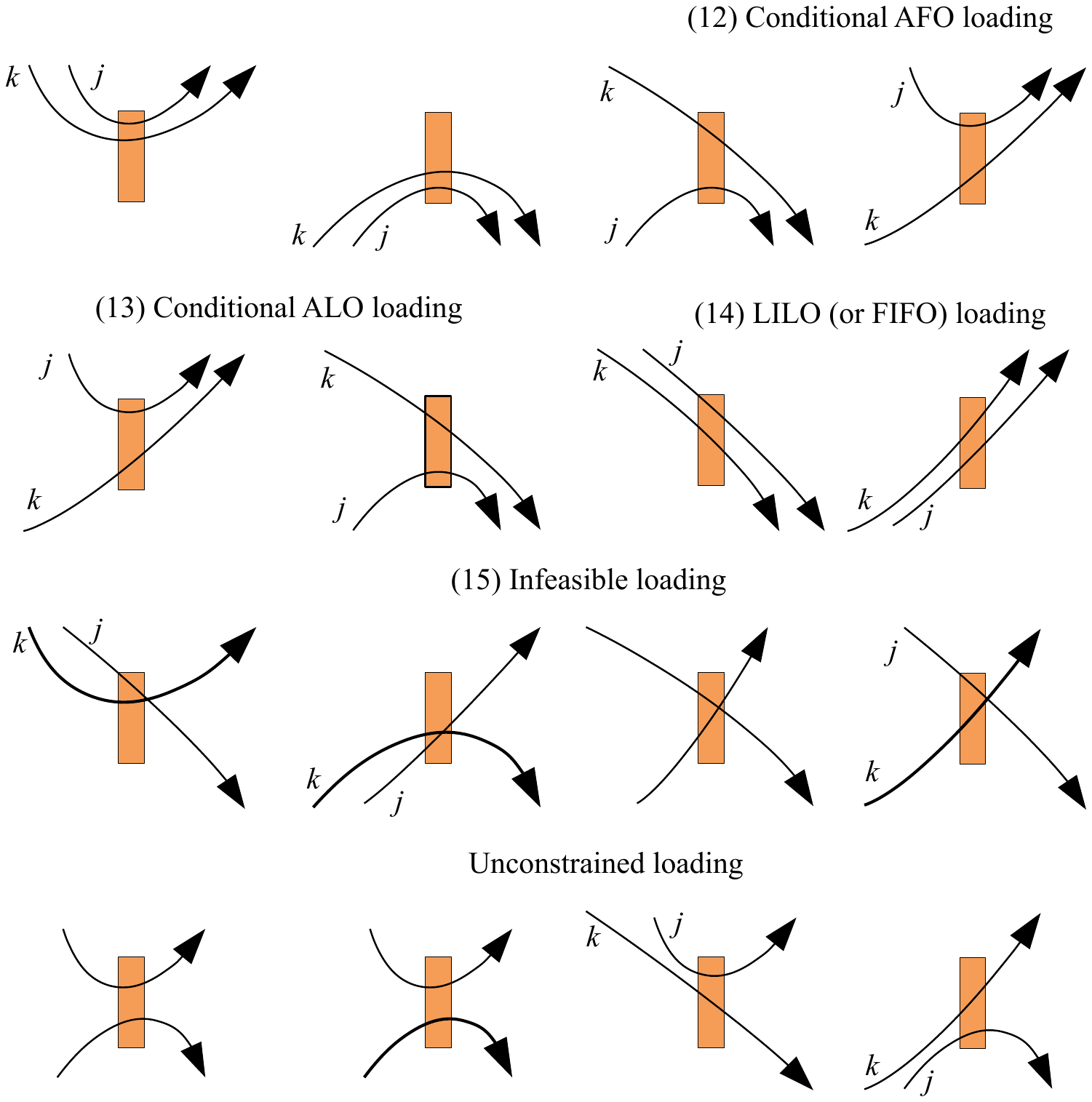}
\par\end{centering}
\caption{Deadlock: $P^3\sim P^4$. \label{fig:Deadlock}}
\end{figure}

The deadlock can be avoided by enforcing the following constraint, which completely avoids overlap between services of the aforementioned two types of PD requests:
\begin{align}
y_{ij}^k = 0, \forall (i,j)\in A', \quad \begin{cases}
\forall   j\in P^{4}, k\in P^{3},\\
\forall   j\in P^{3}, k\in P^{4}.
\end{cases} \label{eq:Deadlock-a}
\end{align}
It can be shown that the above condition implies that $y_{i,j+n}^k = 0$, for all $(i,j+n)\in A'$ with $j, k$ in the domains given in \eqref{eq:Deadlock-a}.

\subsubsection{Case 6. $P^1\sim P^2$: Free case}

This case corresponds to the free case illustrated in Figure \ref{fig:Free}, in which any pair of PD requests do not interfere from each other. The requests can thus be freely served by the RGV with no dedicated service constraints.

\begin{figure}[H]
\begin{centering}
\includegraphics[scale=0.6]{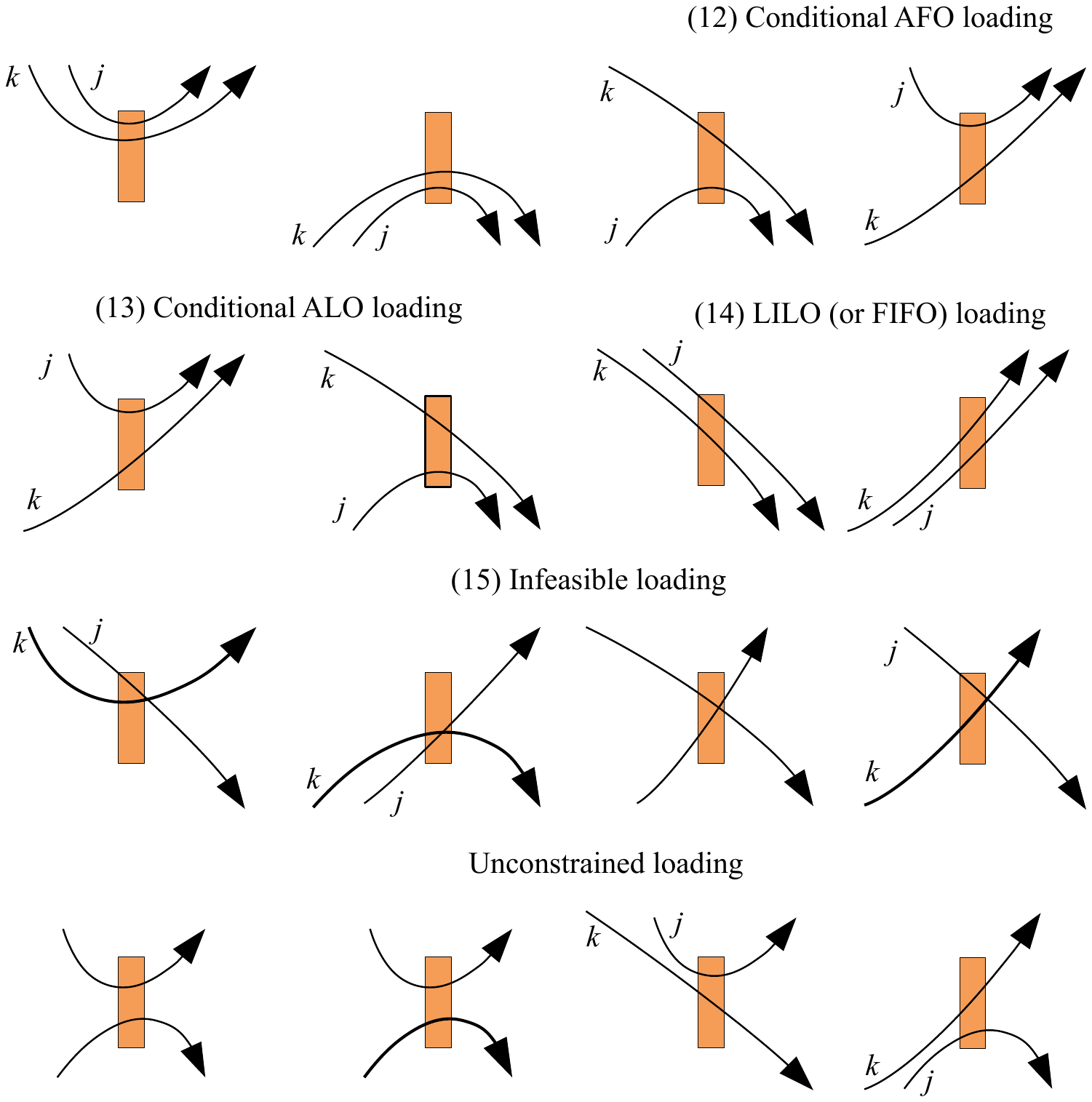}
\par\end{centering}
\caption{Free service: $P^1\sim P^2$. \label{fig:Free}}
\end{figure}

\begin{rem}
The LIFO and FIFO service constraints formulated in (\ref{eq:LIFO}) and (\ref{eq:FIFO}) have forms conciser than those developed in \citep{erdougan2009pickup} where two additional groups of binary variables with the same dimensionality of $y_{ij}^k$ are required. The current formulation avoids those binary variables and the constraints associated, and hence is computationally more efficient in general.
\end{rem}

\subsection{Problem formulation}

This subsection develops a mathematical model of the energy-efficient RGV routing problem. The major parameters and notations used are listed below for ease of reference:

{\footnotesize
\begin{tabular}[l]{c p{350pt}}
\\
$n$ & The number of PD requests; $|P| = |D| = n$;\\
$P$ & The set of pickup tasks, $P = \{1, 2, \dots, n\};$\\
$D$ & The set of delivery tasks, $D = \{1+n, 2+n, \dots, 2n\};$\\
$V'$ & The union of pickup and delivery tasks, $V'=P\cup D$ ;\\
$A'$ & The set of arcs $(i,j)$ with $i,j\in V'$;\\
$V$ & The full tasks including virtual start and end tasks, $V=\{0\}\cup V' \cup \{2n+1\}$;\\
$A$ & The set of arcs $(i,j)$ with $i,j\in V$;\\
$Q$ & The RGV's capacity in units of load (and every unit has the same weight \footnote{Generalization can be made to associate each unit of load with a different weight if such information is available.});\\
$q_i$ & The units of load associated with task $i$, satisfying $q_i>0$, $q_{i+n} <0$ and $q_i = -q_{i+n}$ for all $i\in P$. There are several types of containers with different sizes.  For example, the container in Figure \ref{fig: container_a} admits one unit of load, and so $q_i = 1, q_{i+n}=-1$; and the container in Figure \ref{fig: container_b} admits two units of load, and so $q_i=2, q_{i+n}=-2$;\\
$s_{i}$ & The operating duration of task $i\in V \backslash \{2n+1\}$, with $s_0 \triangleq 0$;\\
$[e_{i},l_{i}]$ & The delivery TW associated with each PD request $i \in P$;\\
$r_{ij}$  & The RGV travelling distance between tasks $i$ and $j$;\\
$t_{ij}$ & The RGV travelling time between tasks $i$ and $j$;\\
\\
\end{tabular}}

\begin{figure}
\centering
        \begin{subfigure}[b]{0.25\textwidth}
                \includegraphics[width=\textwidth]{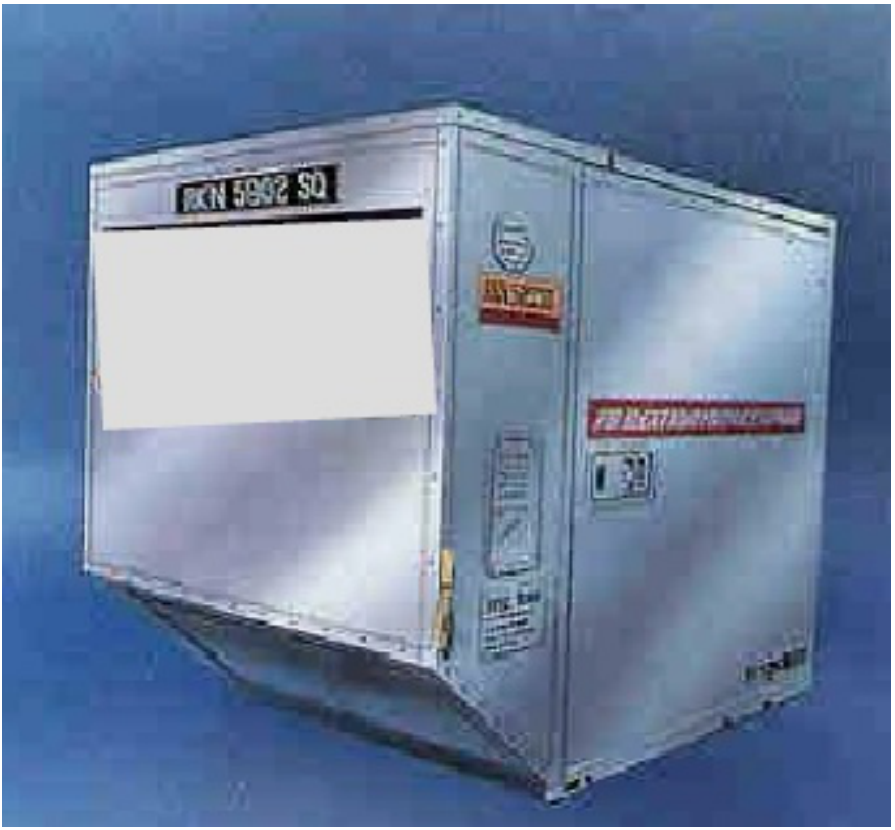}
                \caption{\footnotesize Container with unit load.}
                \label{fig: container_a}
        \end{subfigure}
        \begin{subfigure}[b]{0.407\textwidth}
                \includegraphics[width=\textwidth]{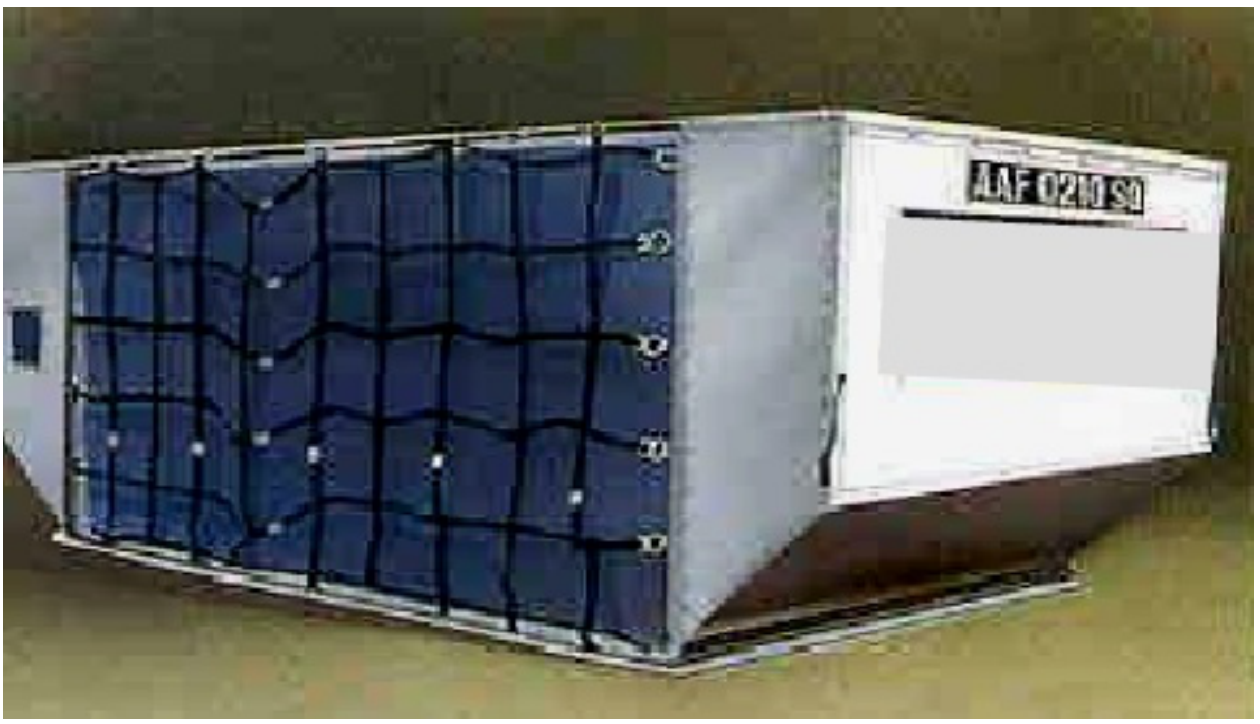}
                \caption{\footnotesize Containers with two units of load.}
                \label{fig: container_b}
        \end{subfigure}
\caption{Containers with different capacities.}
\label{fig: container}
\end{figure}

There are four groups of decision variables:

{\footnotesize
\begin{tabular}[l]{c p{350pt}}
\\
$x_{ij}$ & The binary variable to indicate  whether task $i$ is processed right before task $j$;\\
$y_{ij}^k$ & The binary variable to indicate whether the arc $(i,j)$ is traversed on the path from vertices $k$ to $k+n$, where $k \in P$;\\
$b_{i}$ & The time starting to handle task $i\in V$, with $b_{0} \triangleq 0$;\\
$w_{i}$ & The units of load on the RGV upon completion of task $i\in V$, with $w_{0} \triangleq 0$.\\
&
\end{tabular}}

The static optimal RGV routing problem can be formulated as a mixed integer program defined in (\ref{eq: PDP-TSLU})-(\ref{eq:c - b and w}). For convenience of presentation, we term it as the \emph{pickup and delivery problem with TSLU operations,} or \emph{PDP-TSLU} for short.

\begin{spacing}{1.0}
{\small
\begin{align}
\textbf{PDP-TSLU}: & \quad \min \sum_{i,j: (i,j)\in A}c_{ij}(w_{i})x_{ij}\label{eq: PDP-TSLU}\\
\text{subject to,} \quad \sum_{j:\,(i,j)\in A}x_{ij}=1 &  \quad \forall   i \in V\backslash \{2n+1\};\label{eq:c - outgoing degree}\\
\sum_{j:\,(j,i)\in A}x_{ji}=1 &  \quad \forall   i \in V\backslash \{0\};\label{eq:c - incoming degree}\\
b_{i+n}\ge b_{i}+s_{i}+t_{i,i+n} &  \quad \forall   i \in P;\label{eq:c - completion time}\\
b_{i \oplus 1}\ge b_{i}+s_{i} &  \quad \forall   b_{i \oplus 1}\ne \varnothing, i \in P;\label{eq:c - precedence in a pickup queue}\\
x_{ij}=1\Rightarrow\Biggl\{\begin{array}{l}
b_{j}\ge b_{i}+s_{i}+t_{ij}\\
w_{j}\ge w_{i}+q_{j}
\end{array} &  \quad \begin{array}{l}
\forall  (i,j)\in A,\,j \in V' \cup \{2n+1\},\\
\forall  (i,j)\in A,\,j \in V';
\end{array}\label{eq:c - time consistency}\\
e_{i}\le b_{i+n}+s_{i+n}\le l_{i} &  \quad \forall   i \in P;\label{eq:c - time window}\\
\max\{0, q_i\} \le w_{i} \le \min\{Q, Q+q_i\} &  \quad \forall   i \in V';\label{eq:c - load bound}\\
\sum_{j:\,(i,j)\in A'}y_{ij}^k-\sum_{j:\,(j,i)\in A'}y_{ji}^k=\begin{cases}
1 & \text{if }i=k\\
-1 & \text{if }i=k+n\\
0 & \text{otherwise}
\end{cases} &  \quad \forall   i \in V',\,k \in P;\label{eq:c - pickups to deliveries}\\
\eqref{eq:LIFO}-\eqref{eq:Deadlock-a}; \label{eq:c-TSLU} \\
y_{ij}^k\le x_{ij} &  \quad \forall  (i,j)\in A',\,k \in P; \label{eq:c - decision consistency}\\
y_{ij}^k\in\{0,\,1\} &  \quad \text{\ensuremath{\forall  }}(i,j)\in A',\,k \in P; \label{eq:c - y}\\
x_{ij}\in\{0,\,1\} &  \quad \text{\ensuremath{\forall  }}(i,j)\in A; \label{eq:c - x}\\
b_{i}\ge 0,w_{j}\ge 0 &  \quad \forall   i \in V'\cup \{2n+1\},\,j \in V'. \label{eq:c - b and w}
\end{align}
}{\small \par}
\end{spacing}

The objective function in (\ref{eq: PDP-TSLU}) measures the total energy consumption and is a \emph{bilinear} function in $w_{i}$ and $x_{ij}$, whose specific form is derived as follows. Each arc $(i,j)\in A$ is associated with an energy cost $c_{ij}(w_{i})$ (as explicitly depends on the load weight) and a travel
time $t_{ij}$. With the operating profile of the RGV shown in
Figure \ref{fig:RGV profile}, the arc travel time can be expressed as an explicit function of
the arc distance $r_{ij}$ as
\begin{equation}
t_{ij}=\begin{cases}
2\sqrt{\frac{r_{ij}}{a}} & \text{if }0\le r_{ij}\le2r_{1},\\
2t_{1}+\frac{r_{ij}-2r_{1}}{v_{c}} & \text{if }r_{ij}>2r_{1},
\end{cases}\label{eq: t_ij}
\end{equation}
where the parameters $a$, $v_{c}$, $t_{1}$ and $r_{1}$ are defined in Figure \ref{fig:RGV profile}. Let $w_{RGV}$ be the net weight of the empty RGV and $w_i$ denote the load weight upon leaving vertex $i$. The energy cost $c_{ij}(w_i)$ is calculated as the work done by the RGV for traversing the arc $(i,j)$. By Newton's law we have,
\begin{eqnarray}
c_{ij}(w_{i})= & \begin{cases}
\mu gr_{ij}(w_{RGV}+w_{i}) & \text{if}\,a\le\mbox{\ensuremath{\mu}}g,\\
ar_{ij}(w_{RGV}+w_{i}) & \text{if}\,a>\mu g\,\,\text{and }0\le r_{ij}\le2r_{1},\\
\left(2(a-\mu g)r_{1}+\mu gr_{ij}\right)(w_{RGV}+w_{i}) & \text{if}\,a>\mu g\,\,\text{and }r_{ij}>2r_{1},
\end{cases}\label{eq: c_ij}
\end{eqnarray}
where $\mu$ is the rolling friction coefficient while the RGV moves
on the track and $g$ is the gravitational acceleration, both of which
are given constants. Although $\mu$, $g$ and $r_{ij}$ are given constants, the energy cost $c_{ij}(w_i)$ are variables controllable by adjusting the task service order. This cost is generally increasing in $w_{i}$, in contrast to conventional routing cost which is merely determined by the arc distance $r_{ij}$. In the meanwhile, we set $c_{i,\,2n+1}=0$ for all $i\in D$, to account for the practice that the RGV stops at the last delivery.

\begin{figure}
\begin{centering}
\includegraphics[scale=0.7]{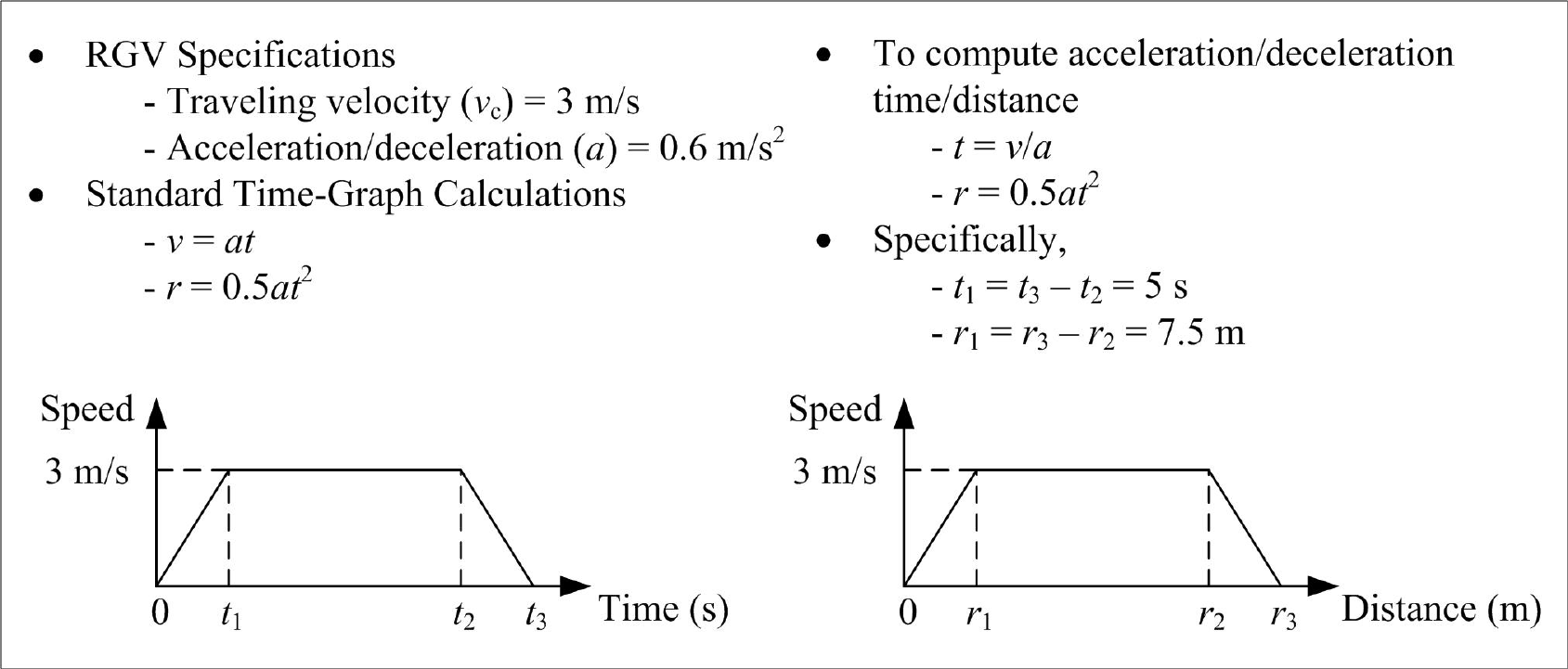}
\par\end{centering}
\caption{Operating profile of an RGV. \label{fig:RGV profile}}
\end{figure}

We briefly explain the constraints of PDP-TSLU. Constraints (\ref{eq:c - outgoing degree}) and (\ref{eq:c - incoming degree}) ensure that each task is served exactly once, with the service starting and ending at virtual tasks 0 and $2n+1$, respectively. Constraint (\ref{eq:c - completion time}) states that each pickup task should be processed before its paired delivery task, and constraint (\ref{eq:c - precedence in a pickup queue}) means that containers at the same work station are picked up in FIFO order. In (\ref{eq:c - precedence in a pickup queue}), the term $i\oplus 1$ represents the pickup task queued right behind task $i$ at the same station. For the example shown in Figure \ref{fig: toy example - scenario}, we have $1\oplus 1 = 2$ and $4\oplus 1 = 5$ .

The indicator constraint (\ref{eq:c - time consistency}) ensures the consistency of service time and load variables and also the route connectivity, which can be linearized using the big-$M$ method (refer to Appendix \ref{apdix: big-M formulation} for details). In  (\ref{eq:c - time consistency}), the load inequality constraints are valid alternatives of the corresponding equality constraints for the optimization considered, but this is true for the time constraints if only latest but earliest service constraints are active. Constraint (\ref{eq:c - time window}) imposes a TW constraint on completion time of each request. Constraint (\ref{eq:c - load bound}) enforces the RGV's capacity limit, which is often tighter than applying the obvious bound $0\le w_{i} \le Q$ for all $i\in V'$.

Constraint (\ref{eq:c - pickups to deliveries}) ensures a path from vertices $k$ to $k+n$ for any PD request $k$ that does not pass through the virtual start or end vertex. Constraint (\ref{eq:c-TSLU}) consists of unique conflict-free service constraints resulting from the TSLU operations as revealed in the previous subsection, and constraint (\ref{eq:c - decision consistency}) binds the overall routing decisions with the individual routing decisions of every PD request. Finally, constraints (\ref{eq:c - y})-(\ref{eq:c - b and w}) specify domains of the decision variables.

\begin{rem}
If the objective function is replaced with total travel distance or time, the PDP-TSLU reduces to TSPPDL confined on a linear track if there are only PD requests of Type-1 (or -2), and to TSPPDF confined on a linear track if there are only PD requests of Type-3 (or -4). In the two reduced cases, the conflict-free service constraints (\ref{eq:LIFO})-(\ref{eq:Deadlock-a}) collapse to mean the LIFO and the FIFO service order, respectively.
\end{rem}

\section{Solution approach} \label{sec: solution approach}

This section reformulates the bilinear PDP-TSLU into an MILP, and then reduces the arc set (and so the variables and constraints associated) based on delicate insights into the PDP-TSLU, and next introduces a couple of valid inequalities that exploit structural properties of the problem for solving it more efficiently.

\subsection{Reformulating PDP-TSLU as an MILP}

Let $\alpha_{ij}\triangleq ar_{ij}$ and $\beta_{ij}\triangleq 2(a-\mu g)r_{1}+\mu gr_{ij}$. The PDP-TSLU is bilinear in its present form because the objective function
has the following explicit form:
\begin{equation}
\sum_{(i,j)\in A}c_{ij}(w_{i})x_{ij}=\sum_{(i,j)\in A,r_{ij}\le2r_{1}}\alpha_{ij}\left(w_{RGV}+w_{i}\right)x_{ij}+\sum_{(i,j)\in A,r_{ij}>2r_{1}}\beta_{ij}\left(w_{RGV}+w_{i}\right)x_{ij},\label{eq: obj}
\end{equation}
which contains bilinear terms of $w_{i}x_{ij}$. This makes the problem
difficult to optimize directly. Fortunately, the objective function can be linearized by introducing
new decision variables and additional linear constraints.

Without loss of generality, let us assume $a>\mu g$ (which means
that braking is required for stopping a moving RGV at a deceleration
of $a$). In this case, the objective function of the PDP-TSLU is equivalent
to
\begin{eqnarray*}
\sum_{(i,j)\in A}c_{ij}(w_{i})x_{ij} & = & \sum_{i\in V\backslash\{2 n +1\}}z_{i},
\end{eqnarray*}
subject to, for all feasible index $j$,
\[
x_{ij}=1\Rightarrow z_{i}=\begin{cases}
\alpha_{ij}(w_{RGV}+w_{i}) & \forall  (i,j)\in A,\,r_{ij}\le2r_{1},\\
\beta_{ij}(w_{RGV}+w_{i}) & \forall  (i,j)\in A,\,r_{ij}>2r_{1}.
\end{cases}
\]
Note that it is unnecessary to introduce $z_{ij}$ for each
$x_{ij}$, because there always exists one and only one immediate successor $j$ to $i$ such that $x_{ij} = 1$ by constraint (\ref{eq:c - outgoing degree}).

The above indicator constraints can be handled directly by solvers embedded
in software like CPLEX \citep{studio2011v124}. Alternatively,
they can be reformulated into linear forms as
\begin{eqnarray*}
z_{i}\ge\alpha_{ij}(w_{RGV}+w_{i})-\gamma_{ij}(1-x_{ij}), &  & \forall  (i,j)\in A,\,r_{ij}\le2r_{1},\\
z_{i}\ge\beta_{ij}(w_{RGV}+w_{i})-\gamma_{ij}(1-x_{ij}), &  & \forall  (i,j)\in A,\,r_{ij}>2r_{1},
\end{eqnarray*}
where $\gamma_{ij}$ are upper bounds of $\alpha_{ij}(w_{RGV}+w_{i})$
if $r_{ij}\le2r_{1}$ and of $\beta_{ij}(w_{RGV}+w_{i})$ if $r_{ij}>2r_{1}$. It is feasible to set $\gamma_{ij}=\alpha_{ij}(w_{RGV}+\min\{Q,Q+q_{i}\})$
and $\gamma_{ij}=\beta_{ij}(w_{RGV}+\min\{Q,Q+q_{i}\})$ for the
two cases, respectively. Note that the inequality relaxations of the
indicator constraints do not change the optimal solution because the
objective function minimizes the sum of $z_{i}$ wtih unit
coefficients. Consequently, the PDP-TSLU becomes an MILP solvable
by standard solvers. The solution process can further be enhanced by exploiting structural properties underlying the problem, which are pursued in the next two subsections.

\subsection{Arc set reduction} \label{subsec: arc_set_reduction}

For practical reasons, not all arcs of the complete graph are feasible. By exploiting structural properties of the problem, the arc set $A$ in PDP-TSLU is reducible to the one defined in (\ref{eq:arc set}),
where the relation $i \lhd j $ (or $i \rhd j$) means that pickup task $i$ is queued in front of (or behind) pickup task $j$ at the same station and the term $i\oplus 1$ was defined when explaining \eqref{eq:c - precedence in a pickup queue}.

\begin{align}
A&=\Bigl\{(0,j): j\in P \; \mbox{such that}\; \{k: k\lhd j\} = \varnothing  \Bigr\}\nonumber\\
& \quad \cup\Bigl\{(i,\,2n+1): i\in D\; \mbox{such that}\; \{k: k \rhd i-n\} = \varnothing \Bigr\}\nonumber \\
& \quad \cup\Bigl\{ (i,j):\,i\in P,\,j\in P\cup D\backslash \cup_{l=1}^{4}S_{P,l}^{i} \backslash\cup_{l=1}^{3}S_{D,l}^{i}\Bigr\}\nonumber\\
& \quad \cup\Bigl\{ (i,j):\,i\in D,\,j\in P\cup D \backslash \cup_{l=5}^{9}S_{P,l}^{i} \backslash \cup_{l=4}^{8}S_{D,l}^{i} \Bigr\}\nonumber\\
& \quad \big\backslash\Bigl\{(i,\,j): i,j\in P\cup D\; \mbox{such that}\; i=j\;\mbox{or}\;i = j+n\Bigr\} ,\label{eq:arc set}
\end{align}
where
\begin{align*}
S_{P,\,1}^{i} & \triangleq \left\{ j\in P:j\lhd i\right\} ,\\
S_{P,\,2}^{i} & \triangleq \left\{ j\in P:j\rhd i\oplus1\right\} ,\\
S_{P,\,3}^{i} & \triangleq \left\{ j\in P:j\in P^{3}, \mbox{if}\;i\in P^{1}\cup P^{4}\right\}, \\
S_{P,\,4}^{i} & \triangleq \left\{ j \in P:\,j\in P^{4},\mbox{if}\;i\in P^{2}\cup P^{3}\right\} ,\\
S_{P,\,5}^{i} & \triangleq \left\{ j\in P:j \lhd i-n \right\} ,\\
S_{P,\,6}^{i} & \triangleq \bigl\{ j\in P: \{ k\in P^3: \,i-n\lhd k \lhd j \} \ne \varnothing, \,\mbox{if}\; i-n\in P^1 \bigr\}\\
S_{P,\,7}^{i} & \triangleq \bigl\{ j\in P: \{k\in P^4: \,i-n\lhd k \lhd j\}\ne \varnothing,\; \mbox{if}\;\,i-n\in P^2 \bigr\},\\
S_{P,\,8}^{i} & \triangleq \bigl\{ j\in P: \big|\,\{k\in P^3: i-n\lhd k\lhd j\}\,\big| \ge Q,\; \mbox{if}\;i-n \in P^3 \bigr\}\\
S_{P,\,9}^{i} & \triangleq \bigl\{ j\in P: \big|\,\{k\in P^4: i-n\lhd k\lhd j\}\,\big| \ge Q,\; \mbox{if}\;i-n \in P^4\bigr\},\\
S_{D,\,1}^{i} & \triangleq \left\{ j\in D:\,j - n \rhd i\right\} ,\\
S_{D,\,2}^{i} & \triangleq \left\{ j\in D:\,j - n\in P^{1}\cup P^{4}\backslash\{i\},\mbox{if}\;i\in P^{1}\cup P^{3}\right\} \\
S_{D,\,3}^{i} & \triangleq \left\{ j \in D:j - n \in P^{2}\cup P^{3}\backslash\{i\},\mbox{if}\;i\in P^{2}\cup P^{4}\right\} ,\\
S_{D,\,4}^{i} & \triangleq \left\{ j\in D:\,j - n \rhd i-n,\,\mbox{if}\;i-n\in P^{1}\cup P^{2}\right\} ,\\
S_{D,\,5}^{i} & \triangleq \left\{ j\in D:\,j - n \lhd i-n,\,\mbox{if}\;i-n\in P^{3}\cup P^{4}\right\} ,\\
S_{D,\,6}^{i} & \triangleq \left\{ j\in D:\,j - n \in S_{P,\,8}^{i} \cup S_{P,\,9}^{i}\right\} ,\\
S_{D,\,7}^{i} & \triangleq \left\{ j\in D:\,j - n\in P^{2}\cup P^{4}, \,\mbox{if}\;i-n\in P^{3}\right\} ,\\
S_{D,\,8}^{i} & \triangleq \left\{ j\in D:\,j - n\in P^{1}\cup P^{3}, \,\mbox{if}\;i-n\in P^{4}\right\} .
\end{align*}

Of the reduced arc set $A$, the first subset consists of arcs connecting
the start vertex with the head vertex of each pickup queue. The second
subset consists of arcs connecting each delivery vertex, whose paired pickup
vertex is the tail of a pickup queue, with the end vertex.
The third subset consists of arcs connecting a pickup vertex with
all other pickup vertices that are neither predecessors of the current
vertex ($ S_{P,\,1}^{i}$) nor successors other than the
nearest one in the same pickup queue ($ S_{P,\,2}^{i}$
) and meanwhile services of which avoid unloading deadlock (\ie, excluding $S_{P,\,3}^{i} \cup S_{P,\,4}^{i}$),
and with delivery vertices whose paired pickup vertices are not successors
of the current vertex ($ S_{D,\,1}^{i}$) and meanwhile
services of which avoid unloading deadlock (\ie, excluding $ S_{D,\,2}^{i}\cup S_{D,\,3}^{i}$).

The fourth subset of $A$ is most complicated. It includes arcs connecting a delivery vertex
with pickup vertices which are neither predecessors of the pickup
vertex currently in pair ($ S_{P,\,5}^{i}$) nor successors
that queue after the first successor of which the service leads to
unloading deadlock ($ S_{P,\,6}^{i} \cup S_{P,\,7}^{i}$), nor successors that
queue after the pickup vertex currently in pair for more than $Q$
request distance away while the requests are of Type-3 or Type-4 as the
same as the current request ($S_{P,\,8}^{i}\cup S_{P,\,9}^{i}$), and also includes arcs connecting a delivery vertex with
delivery vertices whose paired pickup vertices are neither successors
of the pickup vertex currently in pair if the current request is of
Type-1 or Type-2 ($ S_{D,\,4}^{i}$) nor predecessors or successors
that queue after the pickup vertex currently in pair for more than
$Q$ request distance away if the current request is of Type-3 or
Type-4 ($S_{D,\,5}^{i}\cup S_{D,\,6}^{i}$), and with delivery
vertices that are not on the same side of the current vertex while
the current request is of Type-3 or Type-4 (\ie, excluding $S_{D,\,7}^{i}\cup S_{D,\,8}^{i}$).

Overall the above reductions of the arc set owe to the precedence,
loading and capacity constraints inherent in the transport
service. To have a feel of the power of the above reductions, consider the
toy example shown in Figure \ref{fig: toy example - scenario}.
The arc set is reduced from an original size of 225 ($15^{2}$, for the complete
graph) to a size of 114 (for a much sparser graph), cutting almost
half of the full arcs. A direct consequence of such reductions is
that decision variables and constraints associated with the identified
infeasible arcs are eliminated before formulating and solving the
problem, contributing to a conciser model and hence higher computational
efficiency in general. It should be noted, however, that the elimination of infeasible
arcs is incomplete, which bears the necessity of using the
precedence, loading and capacity constraints in PDP-TSLU to achieve that.

\subsection{Valid inequalities for PDP-TSLU\label{sec: Valid-constraints}}

For a branch-and-cut algorithm to solve the PDP-TSLU, valid inequalities
exploiting structural properties of the problem can be useful to tighten
the lower bounds and avoid unnecessary branching which will consequently
expedite the process of searching for an optimal solution.

The PDP-TSLU is essentially a generic PDP subject to new loading/unloading constraints.
Therefore some valid inequalities known for a generic PDP apply to the PDP-TSLU.
New valid inequalities can also be derived from the unique conflict-free service constraints
in a way similar to those from LIFO or FIFO loading constraints as
presented in \citep{cordeau2010branch,cordeau2010branchFIFO}. Although
there are dozens of such valid inequalities, not all of them achieve
good trade-offs between performance benefit and implementation cost: According to the numerical
studies reported in \citep{cordeau2010branch,cordeau2010branchFIFO},
some kinds of valid inequalities require complicated implementations
(with respect to the demand for computer memory and computational
burden) but only slightly accelerate the solution process. Similar
kinds of valid inequalities are therefore ignored for the PDP-TSLU without
further validation.

\subsubsection{\textit{Valid inequalities inherited from generic PDP} }

Let $S$ be a subset of $P\cup D$. Define $x(S)=\sum_{i,j\in S}x_{ij}$, and let $\pi(S)=\{i\in P|i+n\in S\}$
and $\sigma(S)=\{i+n\in D|i\in S\cap P\}$ which denote the sets of predecessors
and successors of certain vertices in $S$, respectively, where $i+n$ is the delivery task paired with the pickup task $i \in P$. Define $\bar{S}=V\backslash S$. Then the following subtour elimination
constraints are well-known for the PDP (as a generic property of TSP
problems) \citep{cordeau2010branch,cordeau2010branchFIFO}:
\begin{equation}
x(S)\le|S|-1,\,\,\forall  \left|S\right|\ge2.\label{vc - subtour elimination}
\end{equation}
By taking account of precedence restrictions in PDP, these constraints
can further be strengthened as \citep{balas1995precedence,cordeau2010branch,cordeau2010branchFIFO}:
\begin{align}
x(S)+\sum_{i\in S}\sum_{j\in\bar{S}\cap\pi(S)}x_{ij}+\sum_{i\in S\cap\pi(S)}\sum_{j\in\bar{S}\backslash\pi(S)}x_{ij} & \le |S|-1,\label{vc - lifted subtour elimination1}\\
x(S)+\sum_{i\in\bar{S}\cap\sigma(S)}\sum_{j\in S}x_{ij}+\sum_{i\in\bar{S}\backslash\sigma(S)}\sum_{j\in S\cap\sigma(S)}x_{ij} & \le |S|-1.\label{vc - lifted subtour elimination2}
\end{align}

Another set of valid inequalities for the PDP are known as lifted $D_{k}^{+}$
and $D_{k}^{-}$ inequalities. The $D_{k}^{+}$ and $D_{k}^{-}$ inequalities
were firstly introduced by Grotschel and Padberg \citep{grotschel1985polyhedral}
and later strengthened by Cordeau \citep{cordeau2006branch}. The
lifted inequalities apply to any ordered set $S=\{i_{1},i_{2},\,\dots,i_{k}\}\subseteq V$
for $k\ge3$, and take the form of
\begin{align}
\sum_{h=1}^{k}x_{i_{h}i_{h+1}}+2\sum_{h=2}^{k-1}x_{i_{h}i_{1}}+\sum_{h=3}^{k-1}\sum_{l=2}^{h-1}x_{i_{h}i_{l}}+\sum_{j+n\in\bar{S}\cap\sigma(S)}x_{j+n,i_{1}} & \le k-1,\label{eq:vc - Dka}\\
\sum_{h=1}^{k}x_{i_{h}i_{h+1}}+2\sum_{h=3}^{k-1}x_{i_{1}i_{h}}+\sum_{h=4}^{k}\sum_{l=3}^{h-1}x_{i_{h}i_{l}}+\sum_{j\in\bar{S}\cap\pi(S)}x_{i_{1}j} & \le k-1,\label{eq:vc - Dkb}
\end{align}
where $i_{k+1}\triangleq i_{1}$.

\subsubsection{\textit{Valid inequalities derived from LIFO and CLO service restrictions}}

We derive new valid inequalities from the LIFO and CLO service constraints as unique to PDP-TSLU. Given $i\in P^{1},\,j\in P^{1}\backslash\{i\}$, if $x_{ij}=1$ is a feasible integer solution, then the pickup and delivery sequence must satisfy $0\prec i\prec j\prec j+n\prec i+n\prec 2n+1$, where the operator $\prec$ represents the precedence relationship. To enforce LIFO service order under TSLU operations, the predecessor of $i+n$ can only be a pickup task in $P^{2}\cup P^{4}$ or a delivery task of a request in $P\backslash(P^{4}\cup\{i\})$ (including $j$). Meanwhile, a valid successor of $j+n$ can only be a pickup task in $P\backslash \{P^3 \cup \{i, j\}\}$ or a delivery task of a request in $P^{2}\cup P^{3}\cup\{i\}$. Similar reasoning can be applied to determine the possible predecessors of $i+n$ and successors of $j+n$ for $i$ belonging to $P^{4}$ and for following CLO service order under TSLU operations. In summary, if arc $(i,j)$ is included in the routing path, then the set of possible predecessors of $i+n$ are given as
\begin{align*}
\mathcal{P}_{i+n}(i,j)=P^{2}\cup P^{4}\cup\{k+n:\,k\in P\backslash(P^{4}\cup\{i\})\}, & \quad  \forall   i\in P^{1}, j\in P^{1}\backslash\{i\},\\
\mathcal{P}_{i+n}(i,j)=\left(P^{2}\cup P^{4}\backslash\{i\}\right)\cup\{k+n:\,k\in P\backslash(P^{3}\cup\{i\})\}, &  \quad \forall   i\in P^{4}, j\in P^{1},\\
\mathcal{P}_{i+n}(i,j)=P^{1}\cup P^{3}\cup\{k+n:\,k\in P\backslash(P^{3}\cup\{i\})\}, & \quad  \forall   i\in P^{2}, j\in P^{2}\backslash\{i\},\\
\mathcal{P}_{i+n}(i,j)=\left(P^{1}\cup P^{3}\backslash\{i\}\right)\cup\{k+n:\,k\in P\backslash(P^{4}\cup\{i\})\}, & \quad  \forall   i\in P^{3}, j\in P^{2},
\end{align*}
and meanwhile the set of possible successors of $j+n$ are
\begin{align*}
\mathcal{S}_{j+n}(i,j)=\left(P\backslash(P^{3}\cup\{i,j\})\right)\cup\{k+n:\,k\in P^{2}\cup P^{3}\cup\{i\}\}, & \quad \forall   i\in P^{1}, j\in P^{1}\backslash\{i\},\\
\mathcal{S}_{j+n}(i,j)=\left(P\backslash(P^{3}\cup\{i,j\})\right)\cup\{k+n:\,k\in P\backslash(P^{3}\cup\{j\})\},  &  \quad\forall   i\in P^{4}, j\in P^{1},\\
\mathcal{S}_{j+n}(i,j)=\left(P\backslash(P^{4}\cup\{i,j\})\right)\cup\{k+n:\,k\in P^{1}\cup P^{4}\cup\{i\}\}, & \quad  \forall   i\in P^{2}, j\in P^{2}\backslash\{i\},\\
\mathcal{S}_{j+n}(i,j)=\left(P\backslash(P^{4}\cup\{i,j\})\right)\cup\{k+n:\,k\in P\backslash(P^{4}\cup\{j\})\},  & \quad \forall   i\in P^{3}, j\in P^{2}.
\end{align*}

On the other hand, given $i\in P^{1},\,j\in P^{1}\backslash\{i\}$, if $x_{j+n, i+n}=1$ is a feasible solution, then the pickup and delivery sequence again must follow the aforementioned precedence order. To enforce the LIFO service order under TSLU operations, the predecessor of $j$ can only be a pickup task in $\{i\}\cup P^{2}\cup P^{4}$ or a delivery task of a request in $P\backslash(P^{4}\cup\{i,j\})$. Meanwhile, a valid successor of $i$ can only be a pickup task in $P\backslash(P^{3}\cup\{i\})$ or a delivery task of a request in $P^{2}\cup P^{3}$. Similar reasoning can be applied to determine the possible predecessors of $j$ and successors of $i$ for $i$ belonging to $P^{4}$ and for satisfying the CLO service order under TSLU operations. In summary, if arc $(j+n,i+n)$ is included in the routing path, then the set of possible predecessors of $j$  are given as
\begin{align*}
\mathcal{P}{}_{j}(j+n,i+n)=\{i\}\cup P^{2}\cup P^{4}\cup\{k+n:\,k\in P\backslash(P^{4}\cup\{i,j\})\}, &  \quad \forall   i\in P^{1}, j\in P^{1}\backslash\{i\},\\
\mathcal{P}{}_{j}(j+n,i+n)=\{0\}\cup P^{2}\cup P^{4}\cup\{k+n:\,P\backslash(P^{3}\cup\{i,j\})\}, &  \quad \forall   i\in P^{4}, j\in P^{1},\\
\mathcal{P}{}_{j}(j+n,i+n)=\{i\}\cup P^{1}\cup P^{3}\cup\{k+n:\,k\in P\backslash(P^{3}\cup\{i,j\})\}, &  \quad \forall   i\in P^{2}, j\in P^{2}\backslash\{i\},\\
\mathcal{P}{}_{j}(j+n,i+n)=\{0\}\cup P^{1}\cup P^{3}\cup\{k+n:\,P\backslash(P^{4}\cup\{i,j\})\}, &  \quad \forall   i\in P^{3}, j\in P^{2},
\end{align*}
and meanwhile the set of possible successors of $i$ are
\begin{align*}
\mathcal{S}{}_{i}(j+n,i+n)=\left(P\backslash(P^{3}\cup\{i\})\right)\cup\{k+n:\,k\in P^{2}\cup P^{3}\}, &  \quad \forall   i\in P^{1}, j\in P^{1}\backslash\{i\},\\
\mathcal{S}{}_{i}(j+n,i+n)=\left(P\backslash(P^{3}\cup\{i\})\right)\cup\{k+n:\,k\in P^{1}\cup(P^{4}\backslash\{i\})\}, &  \quad \forall   i\in P^{4}, j\in P^{1},\\
\mathcal{S}{}_{i}(j+n,i+n)=\left(P\backslash(P^{4}\cup\{i\})\right)\cup\{k+n:\,k\in P^{1}\cup P^{4}\}, &  \quad \forall   i\in P^{2}, j\in P^{2}\backslash\{i\},\\
\mathcal{S}{}_{i}(j+n,i+n)=\left(P\backslash(P^{4}\cup\{i\})\right)\cup\{k+n:\,k\in P^{2}\cup(P^{3}\backslash\{i\})\}, &  \quad \forall   i\in P^{3}, j\in P^{2}.
\end{align*}
In the second and fourth cases of predecessors set of $j$, the vertex $0$ instead of $i$ is included in the set because $j+n\prec i+n$ does not imply $i\prec j$ in those two cases.

With the above insights, four groups of valid inequalities can be obtained for the
PDP-TSLU, each of which indicates a group of arcs that are incompatible with the arc $(i, j)$ or $(j+n, i+n)$ as included in a valid routing path.
\begin{prop}
For each of the vertex pairs $i\in P^{1}\cup P^{4},\,j\in P^{1}\backslash\{i\}$,
or $i\in P^{2}\cup P^{3},\,j\in P^{2}\backslash\{i\}$, the following
inequalities hold for the PDP-TSLU:
\begin{align}
x_{ij}+\sum_{l\notin\mathcal{P}_{i+n}(i,j)}x_{l,i+n} & \le 1,\label{eq: vc - LIFO1a}\\
x_{ij}+\sum_{l\notin\mathcal{S}_{j+n}(i,j)}x_{j+n,l} & \le 1,\label{eq: vc - LIFO1b}\\
x_{j+n,i+n}+\sum_{l\notin\mathcal{P}_{j}(j+n,i+n)}x_{lj} & \le 1,\label{eq: vc - LIFO2a}\\
x_{j+n,i+n}+\sum_{l\notin\mathcal{S}_{i}(j+n,i+n)}x_{il} & \le 1,\label{eq: vc - LIFO2b}
\end{align}
where the variables are zero if their corresponding arcs are not defined
in $A$.
\end{prop}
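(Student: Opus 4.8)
The plan is to prove all four inequalities from a single two-case template, deferring the combinatorial content to the admissible-set characterizations that the discussion preceding the proposition already supplies. First I would isolate the elementary mechanism on \eqref{eq: vc - LIFO1a}. The summation there ranges over arcs entering the delivery vertex $i+n$, and since $i+n \in D \subseteq V \backslash \{0\}$, the incoming-degree constraint \eqref{eq:c - incoming degree} forces $\sum_{l:\,(l,i+n)\in A} x_{l,i+n}=1$ in every feasible solution; restricting the sum to $l \notin \mathcal{P}_{i+n}(i,j)$ can therefore only lower it, so $\sum_{l \notin \mathcal{P}_{i+n}(i,j)} x_{l,i+n} \le 1$ and the inequality holds trivially when $x_{ij}=0$. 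The remaining case $x_{ij}=1$ reduces to the single claim that the unique predecessor of $i+n$ lies in $\mathcal{P}_{i+n}(i,j)$, for then that restricted sum vanishes and the left-hand side equals exactly $1$. I would treat the other three inequalities identically, invoking the outgoing-degree constraint \eqref{eq:c - outgoing degree} at $j+n$ for \eqref{eq: vc - LIFO1b}, the incoming-degree constraint at $j$ for \eqref{eq: vc - LIFO2a}, and the outgoing-degree constraint at $i$ for \eqref{eq: vc - LIFO2b}; in each instance validity collapses to the assertion that the relevant neighbour is confined to the stated set.

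Next I would supply those confinement claims. Assuming $x_{ij}=1$, constraint \eqref{eq:c - completion time} gives $i \prec i+n$ and $j \prec j+n$, while the governing conflict-free rule---LIFO \eqref{eq:LIFO} when $i \in P^{1} \cup P^{2}$ with $j$ of the same type, and CLO \eqref{eq:CLO} when $i \in P^{3} \cup P^{4}$---forces $j+n \prec i+n$, yielding the precedence chain $0 \prec i \prec j \prec j+n \prec i+n \prec 2n+1$ stated in the text. Identifying which tasks may occupy the slot immediately before $i+n$, or immediately after $j+n$, then amounts to discarding every candidate $l$ whose placement at that slot would violate one of \eqref{eq:LIFO}--\eqref{eq:Deadlock-a} or a precedence or capacity constraint; the survivors are exactly the members of $\mathcal{P}_{i+n}(i,j)$ and $\mathcal{S}_{j+n}(i,j)$. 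Running the mirror-image argument from $x_{j+n,i+n}=1$ produces $\mathcal{P}_{j}(j+n,i+n)$ and $\mathcal{S}_{i}(j+n,i+n)$, completing the reduction.

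The hard part, and the only genuine obstacle, is the exhaustive case analysis behind these four set descriptions. For each of the four type-configurations of the ordered pair $(i,j)$---namely $i \in P^{1},\,j \in P^{1}$; $i \in P^{4},\,j \in P^{1}$; $i \in P^{2},\,j \in P^{2}$; and $i \in P^{3},\,j \in P^{2}$---one must separately test every candidate pickup and delivery task against the correct blend of LIFO, FIFO, CFI, CLO, and deadlock rules: a candidate pickup $k$ is admissible only if docking it beside the target vertex blocks no unload of a request still aboard the RGV (screened by \eqref{eq:CFI}, \eqref{eq:CLO}, or \eqref{eq:Deadlock-a} according to the types of $k$ and $i$), while a candidate delivery $k+n$ is admissible only if it respects the LIFO or FIFO nesting enforced by \eqref{eq:LIFO}--\eqref{eq:FIFO}. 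The subtle point, flagged in the text, arises in the two CLO configurations ($i \in P^{3}$ and $i \in P^{4}$): there $j+n \prec i+n$ does not entail $i \prec j$, so the admissible predecessor set of $j$ must include the start vertex $0$ in place of $i$, and missing this would incorrectly tighten \eqref{eq: vc - LIFO2a}. With all four configurations checked, the template of the first paragraph yields the four inequalities at once.
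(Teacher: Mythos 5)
Your proposal is correct and follows essentially the same route as the paper: the paper likewise derives the precedence chain $0\prec i\prec j\prec j+n\prec i+n\prec 2n+1$ from the arc $(i,j)$ or $(j+n,i+n)$ together with the LIFO/CLO service rules, characterizes the admissible predecessor/successor sets by the same type-by-type case analysis (including the noted subtlety that $0$ replaces $i$ in $\mathcal{P}_{j}(j+n,i+n)$ for the CLO configurations), and then reads off the inequalities as incompatibility cuts exactly as in your degree-constraint reduction. Your explicit two-case template ($x_{ij}=0$ versus $x_{ij}=1$) merely makes precise what the paper leaves implicit.
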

The next inequality comes from the fact that whenever an arc $(i,i+n)$
for any $i\in P^{3}\cup P^{4}$ is used, it follows from the CLO loading
restriction that the vehicle must arrive empty at $i$ and leaves
empty from $i+n$.
\begin{prop}
For each of the vertex pairs $i\in P^{4},\,j\in P^{1}$, or $i\in P^{3},\,j\in P^{2}$,
the following inequality holds for the PDP-TSLU:
\begin{equation}
x_{ij}+x_{ji}+x_{i,i+n}+x_{i+n,j+n}\le1,\label{eq:vc - LIFO3a}
\end{equation}
where the variables are zero if their corresponding arcs are not defined
in $A$.
\end{prop}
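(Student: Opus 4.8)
The plan is to show that, in any feasible solution of the PDP-TSLU, the four arcs appearing in \eqref{eq:vc - LIFO3a} are \emph{pairwise} mutually exclusive; since all four decision variables are binary, pairwise exclusivity forces at most one of them to equal $1$, which is exactly the claimed inequality. I would treat the case $i\in P^{4},\,j\in P^{1}$ in full and obtain $i\in P^{3},\,j\in P^{2}$ from the north--south reflection that swaps $P^{1}\leftrightarrow P^{2}$ and $P^{3}\leftrightarrow P^{4}$, under which the degree constraints, the precedence relations, the CLO constraint \eqref{eq:CLO}, the load bounds \eqref{eq:c - load bound}, and the empty-arrival fact are all preserved. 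Throughout I use two elementary facts: whenever $x_{ab}=1$, task $a$ immediately precedes task $b$ in the route, so $a\prec b$ and no task lies strictly between them; and the observation stated just before the proposition, that using the arc $(i,i+n)$ with $i\in P^{4}$ forces the RGV to arrive empty at $i$ and to leave empty from $i+n$.

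First I would dispatch the three pairs that close by a direct structural contradiction. The pair $x_{ij},x_{i,i+n}$ is excluded because both arcs leave $i$, violating the out-degree constraint \eqref{eq:c - outgoing degree}. The pair $x_{ij},x_{ji}$ is excluded because it would give both $i\prec j$ and $j\prec i$, that is, a forbidden $2$-cycle. The pair $x_{ji},x_{i,i+n}$ is excluded because the second arc forces the RGV to arrive empty at $i$, whereas under $x_{ji}=1$ the immediate predecessor of $i$ is the pickup $j\in P^{1}$, for which \eqref{eq:c - load bound} gives $w_{j}\ge q_{j}>0$; hence the RGV cannot be empty on entering $i$.

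The remaining three pairs all reduce to a single CLO violation, and this reduction is the crux of the argument. The key intermediate claim is: if the chosen arcs force the crossing delivery $i+n$ to lie strictly inside the service window $[j,j+n]$ of the North--North request $j$ (that is, $j\prec i+n\prec j+n$), then the unit flow routing request $j$ from $j$ to $j+n$, governed by \eqref{eq:c - pickups to deliveries} and \eqref{eq:c - decision consistency}, must follow the unique route segment joining them and therefore traverse an arc entering $i+n$, giving $y^{j}_{l,i+n}=1$ for some $l$ and contradicting the CLO constraint \eqref{eq:CLO}, which sets $y^{j}_{l,i+n}=0$ for all $l$. It then remains to verify, for each pair, that $j\prec i+n\prec j+n$ is indeed forced. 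For $x_{ji},x_{i+n,j+n}$ this is immediate from the chain $j\prec i\prec i+n\prec j+n$ obtained by concatenating the two arc precedences with $i\prec i+n$. For $x_{ij},x_{i+n,j+n}$, the adjacency of $(i,j)$ together with $i\prec i+n$ rules out $i+n\prec j$, and $i+n\prec j+n$ then yields $j\prec i+n\prec j+n$. For $x_{i,i+n},x_{i+n,j+n}$, neither a vertex may lie strictly between $i$ and $i+n$ nor between $i+n$ and $j+n$, so $j\prec j+n$ forces $j\prec i\prec i+n\prec j+n$, which simultaneously triggers the CLO violation and contradicts empty arrival at $i$ (since $j$ would then be on board when $i$ is picked up). Having excluded all six pairs, I conclude that at most one of the four variables equals $1$, establishing \eqref{eq:vc - LIFO3a}.

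The main obstacle is precisely the intermediate claim for the pairs containing $x_{i+n,j+n}$: one cannot argue through the load variable alone, because the load consistency in \eqref{eq:c - time consistency} is a one-sided inequality (a relaxation), so $w$ need not drop by exactly $|q_{j}|$ at the delivery $j+n$. The contradiction must therefore be produced at the level of the exact route ordering and the request-specific flow variables $y^{j}$ rather than from the aggregate loads; the careful bookkeeping of which vertices are forced to lie strictly between the endpoints of each selected arc is what makes the placement $j\prec i+n\prec j+n$ rigorous, and the symmetric case $i\in P^{3},\,j\in P^{2}$ then follows verbatim under the north--south relabeling.
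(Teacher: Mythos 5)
Your argument is correct, and it is substantially more than what the paper itself offers: the paper justifies \eqref{eq:vc - LIFO3a} only by the one-sentence observation that using the arc $(i,i+n)$ for $i\in P^{3}\cup P^{4}$ forces the RGV to arrive empty at $i$ and leave empty from $i+n$. That remark only disposes of the pairs that contain $x_{i,i+n}$; it says nothing about the coexistence of $x_{ij}$ or $x_{ji}$ with $x_{i+n,j+n}$, which is where the real content of the inequality lies. Your pairwise-exclusivity decomposition supplies exactly the missing mechanism: you reduce each of the three nontrivial pairs to the placement $j\prec i+n\prec j+n$ and then convert that ordering into a violation of the CLO constraint \eqref{eq:CLO} through the request-flow variables $y^{j}$, using the fact that \eqref{eq:c - pickups to deliveries} and \eqref{eq:c - decision consistency} force the unit flow for request $j$ to follow the unique route segment from $j$ to $j+n$ and hence to enter $i+n$. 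Your remark that the one-sided load inequality in the model cannot be used here, so the contradiction must come from the exact ordering and the $y$-variables, is a genuinely important point that the paper glosses over. The trivial exclusions (out-degree for $\{x_{ij},x_{i,i+n}\}$, the two-cycle for $\{x_{ij},x_{ji}\}$ — which, to be fully formal, follows from the time-consistency implications in \eqref{eq:c - time consistency} with positive service times — and the load bound \eqref{eq:c - load bound} against empty arrival for $\{x_{ji},x_{i,i+n}\}$) are all handled correctly, and the north--south reflection argument for the case $i\in P^{3},\,j\in P^{2}$ is legitimate since it permutes the constraint families \eqref{eq:LIFO}--\eqref{eq:Deadlock-a} among themselves. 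In short: where the paper asserts, you prove, and the flow-based CLO contradiction is the piece of the argument the paper's sketch does not actually cover.
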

The following inequality merges the two valid inequalities $x_{ij}+x_{i+n,j+n}+x_{j+n,i}\le1$
and $x_{ij}+x_{i+n,j+n}+x_{i+n,j}\le1$, for each pair of $i$
and $j$ in proper sets.
\begin{prop}
For each of the vertex pairs $i\in P^{1}\cup P^{4},\,j\in P^{1}\backslash\{i\}$,
or $i\in P^{2}\cup P^{3},\,j\in P^{2}\backslash\{i\}$, the following
inequality holds for the PDP-TSLU:
\begin{equation}
x_{ij}+x_{i+n,j+n}+x_{j+n,i}+x_{i+n,j}\le1,\label{eq:vc - LIFO3b}
\end{equation}
where the variables are zero if their corresponding arcs are not defined
in A.
\end{prop}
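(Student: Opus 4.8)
The plan is to prove the merged inequality (\ref{eq:vc - LIFO3b}) by first re-establishing the two component inequalities $x_{ij}+x_{i+n,j+n}+x_{j+n,i}\le 1$ and $x_{ij}+x_{i+n,j+n}+x_{i+n,j}\le 1$, and then showing that the only configuration that could violate the merged bound --- namely $x_{j+n,i}=x_{i+n,j}=1$ --- is infeasible. Since all variables involved are binary, each component inequality follows once we verify that no two of its three arcs can be active simultaneously in a feasible routing path. I would carry out these pairwise checks using only two ingredients: the degree constraints (\ref{eq:c - outgoing degree})--(\ref{eq:c - incoming degree}), and the precedence/service orderings implied by (\ref{eq:c - completion time}) together with the LIFO constraint (\ref{eq:LIFO}) or the CLO constraint (\ref{eq:CLO}), depending on the type of the pair $(i,j)$.

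For the component inequalities, the arc pairs split into three kinds. First, $x_{ij}$ and $x_{i+n,j}$ are both incoming arcs of $j$, while $x_{i+n,j+n}$ and $x_{i+n,j}$ are both outgoing arcs of $i+n$; each such pair sums to at most $1$ directly by (\ref{eq:c - incoming degree}) and (\ref{eq:c - outgoing degree}), respectively. Second, the pairs $\{x_{ij},x_{j+n,i}\}$ and $\{x_{i+n,j+n},x_{j+n,i}\}$ each force a precedence cycle: using $i\prec i+n$ and $j\prec j+n$ from (\ref{eq:c - completion time}), $x_{ij}=x_{j+n,i}=1$ gives $i\prec j\prec j+n\prec i$, and $x_{i+n,j+n}=x_{j+n,i}=1$ gives $i\prec i+n\prec j+n\prec i$, both impossible in a simple path. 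Third, the pair $\{x_{ij},x_{i+n,j+n}\}$ forces the FIFO ordering $i\prec j\prec i+n\prec j+n$; for $i,j$ of the same type ($P^{1}$ or $P^{2}$) this violates the LIFO requirement (\ref{eq:LIFO}), and for the crossing pairs ($i\in P^{4},j\in P^{1}$ or $i\in P^{3},j\in P^{2}$) it violates the CLO requirement (\ref{eq:CLO}), which forces $j+n\prec i+n$. These observations together establish both component inequalities.

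For the merge, I would show that $x_{j+n,i}$ and $x_{i+n,j}$ cannot both equal $1$: if they did, combining $j+n\prec i$ and $i+n\prec j$ with $i\prec i+n$ and $j\prec j+n$ yields $j+n\prec i\prec i+n\prec j\prec j+n$, again a cycle. A short case analysis then gives (\ref{eq:vc - LIFO3b}): if $x_{i+n,j}=0$ the merged sum reduces to the first component inequality, and if $x_{i+n,j}=1$ then $x_{j+n,i}=0$ and the sum reduces to the second. I expect the main obstacle to be bookkeeping rather than depth, since the only step that invokes the service constraints (\ref{eq:LIFO})/(\ref{eq:CLO}) rather than pure precedence or degree counting is the single FIFO-violation argument, and one must check that it is applied with the correct constraint across all four admissible type combinations. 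I would also note that the six arc pairs among $\{x_{ij},x_{i+n,j+n},x_{j+n,i},x_{i+n,j}\}$ turn out to be pairwise incompatible, so (\ref{eq:vc - LIFO3b}) can alternatively be obtained at once, without explicitly forming the two component inequalities.
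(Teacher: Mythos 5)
Your proposal is correct and follows the same route the paper takes: the paper simply asserts that (\ref{eq:vc - LIFO3b}) merges the two component inequalities $x_{ij}+x_{i+n,j+n}+x_{j+n,i}\le1$ and $x_{ij}+x_{i+n,j+n}+x_{i+n,j}\le1$, and you supply exactly the missing verification --- degree and precedence arguments for five of the arc pairs, and the LIFO constraint (\ref{eq:LIFO}) or CLO constraint (\ref{eq:CLO}) (matched correctly to the four type combinations) to rule out the coexistence of $x_{ij}$ and $x_{i+n,j+n}$. Your closing observation that all six pairs among the four arcs are pairwise incompatible, so the bound follows in one step, is a valid and slightly cleaner packaging of the same argument.
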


\subsubsection{\textit{Valid inequalities derived from FIFO and CLO service restrictions}}

This subsection derives new valid inequalities pertinent to the FIFO and CLO service constraints of the PDP-TSLU. Given $i\in P^{4},\,j\in P^{4}\backslash \{i\}$, if $x_{ij}=1$ is a feasible solution, then the pickup and delivery sequence must satisfy $0 \prec i \prec j \prec i+n\prec j+n\prec 2n+1$. To enforce the FIFO service order under TSLU operations, the successor of $i+n$ can only be a pickup task in $P\backslash(P^{3}\cup\{i,j\})$ or a delivery task of a request in $P^{2}\cup\{j\}$. Meanwhile, a valid predecessor of $j+n$ can only be a pickup task in $P^{2}\cup(P^{4}\backslash\{i,j\})$ or a delivery task of a request in $P^{1}\cup P^{2}\cup\{i\}$. Similar reasoning can be applied to determine the possible successors of $i+n$ and predecessors of $j+n$ for $i$ belonging to $P^{1}$ and for satisfying CLO service order under TSLU operations. In summary, if arc $(i,j)$ is included in the routing path, then the set of possible successors of $i+n$ are obtained as
\begin{align*}
\mathcal{S}_{i+n}(i,j)=\left(P\backslash(P^{3}\cup\{i,j\})\right)\cup\left\{ k+n:\,k\in P\backslash(P^{3}\cup\{i\})\right\} , &  \quad \forall   i\in P^{1}, j\in P^{4},\\
\mathcal{S}_{i+n}(i,j)=\left(P\backslash(P^{3}\cup\{i,j\})\right)\cup\left\{ k+n:\,k\in P^{2}\cup\{j\}\right\} , &  \quad \forall   i\in P^{4}, j\in P^{4}\backslash\{i\},\\
\mathcal{S}_{i+n}(i,j)=\left(P\backslash(P^{4}\cup\{i,j\})\right)\cup\left\{ k+n:\,k\in P\backslash(P^{4}\cup\{i\})\right\} , &  \quad \forall   i\in P^{2}, j\in P^{3},\\
\mathcal{S}_{i+n}(i,j)=\left(P\backslash(P^{4}\cup\{i,j\})\right)\cup\left\{ k+n:\,k\in P^{1}\cup\{j\}\right\} , &  \quad \forall   i\in P^{3}, j\in P^{3}\backslash\{i\},
\end{align*}
and meanwhile the set of possible predecessors of $j+n$ are
\begin{align*}
\mathcal{P}_{j+n}(i,j)=P^{2}\cup(P^{4}\backslash\{j\})\cup\left\{ k+n:\,k\in P\backslash(P^{3}\cup\{j\})\right\} , &  \quad \forall   i\in P^{1}, j\in P^{4},\\
\mathcal{P}_{j+n}(i,j)=P^{2}\cup(P^{4}\backslash\{i,j\})\cup\left\{ k+n:\,k\in P^{1}\cup P^{2}\cup\{i\}\right\} , &  \quad \forall   i\in P^{4}, j\in P^{4}\backslash\{i\},\\
\mathcal{P}_{j+n}(i,j)=P^{1}\cup(P^{3}\backslash\{j\})\cup\left\{ k+n:\,k\in P\backslash(P^{4}\cup\{j\})\right\} , &  \quad \forall   i\in P^{2}, j\in P^{3},\\
\mathcal{P}_{j+n}(i,j)=P^{1}\cup(P^{3}\backslash\{i,j\})\cup\left\{ k+n:\,k\in P^{1}\cup P^{2}\cup\{i\}\right\} , &  \quad \forall   i\in P^{3}, j\in P^{3}\backslash\{i\}.
\end{align*}

Given $i\in P^{4},\,j\in P^{4}$, if $x_{i+n, j+n}=1$ is a feasible solution, then to comply with the FIFO service order under TSLU operations, the successor of $i$ can only be a pickup vertex in $\{j\}\cup P^{1}\cup P^{2}$ or a delivery vertex of a request in $P^{1}\cup(P^{4}\backslash\{j\})$. Meanwhile, a valid predecessor of $j$ can only be a pickup vertex in $\{i\}\cup P^{1}$ or a delivery vertex of a request in $P\backslash(P^{3}\cup\{i,j\})$. Similar reasoning can be applied to determine the possible successors of $i$ and predecessors of $j$ for $i$ belonging to $P^1$ and for satisfying CLO service order under TSLU operations. In summary, if arc $(i+n, j+n)$ is included in the routing path, then the set of possible successors of $i$ are obtained as
\begin{align*}
\mathcal{S}_{i}(i+n,j+n)=\{j\}\cup(P^{1}\backslash\{i\})\cup P^{2}\cup\left\{ k+n:\,k\in P^{2}\cup P^{3}\cup\{i\}\right\} , &  \quad \forall   i\in P^{1}, j\in P^{4},\\
\mathcal{S}_{i}(i+n,j+n)=\{j\}\cup P^{1}\cup P^{2}\cup\left\{ k+n:\,k\in P^{1}\cup(P^{4}\backslash\{j\})\right\} , &  \quad \forall   i\in P^{4}, j\in P^{4}\backslash\{i\},\\
\mathcal{S}_{i}(i+n,j+n)=\{j\}\cup P^{1}\cup(P^{2}\backslash\{i\})\cup\left\{ k+n:\,k\in P^{1}\cup P^{4}\cup\{i\}\right\} , &  \quad \forall   i\in P^{2}, j\in P^{3},\\
\mathcal{S}_{i}(i+n,j+n)=\{j\}\cup P^{1}\cup P^{2}\cup\left\{ k+n:\,k\in P^{2}\cup(P^{3}\backslash\{j\})\right\} , &  \quad \forall   i\in P^{3}, j\in P^{3}\backslash\{i\},
\end{align*}
and meanwhile the set of possible predecessors of $j$ are
\begin{align*}
\mathcal{P}_{j}(i+n,j+n)=\{0\}\cup P^{1}\cup(P^{4}\backslash\{j\})\cup\left\{ k+n:\,k\in P\backslash\{i,j\}\right\} , &  \quad \forall   i\in P^{1}, j\in P^{4},\\
\mathcal{P}_{j}(i+n,j+n)=\{i\}\cup P^{1}\cup\left\{ k+n:\,k\in P\backslash(P^{3}\cup\{i,j\})\right\} , &  \quad \forall   i\in P^{4}, j\in P^{4}\backslash\{i\},\\
\mathcal{P}_{j}(i+n,j+n)=\{0\}\cup P^{2}\cup(P^{3}\backslash\{j\})\cup\left\{ k+n:\,k\in P\backslash\{i,j\}\right\} , &  \quad \forall   i\in P^{2}, j\in P^{3},\\
\mathcal{P}_{j}(i+n,j+n)=\{i\}\cup P^{2}\cup\left\{ k+n:\,k\in P\backslash(P^{4}\cup\{i,j\})\right\} , &  \quad \forall   i\in P^{3}, j\in P^{3}\backslash\{i\}.
\end{align*}

With the above insights, four groups of valid inequalities in analog to those in
(\ref{eq: vc - LIFO1a})-(\ref{eq: vc - LIFO2b}) can be obtained to invalidate infeasible predecessors and successors to a given task:
\begin{prop}
For each of the vertex pairs $i\in P^{1}\cup P^{4},\,j\in P^{4}\backslash\{i\}$,
or $i\in P^{2}\cup P^{3},\,j\in P^{3}\backslash\{i\}$, the following
inequalities hold for the PDP-TSLU:
\begin{align}
x_{ij}+\sum_{l\notin\mathcal{S}_{i+n}(i,j)}x_{i+n,l} & \le 1,\label{vc - FIFO1a}\\
x_{ij}+\sum_{l\notin\mathcal{P}_{j+n}(i,j)}x_{l,j+n} & \le 1,\label{vc - FIFO1b}\\
x_{i+n,j+n}+\sum_{l\notin\mathcal{S}_{i}(i+n,j+n)}x_{il} & \le 1,\label{vc - FIFO2a}\\
x_{i+n,j+n}+\sum_{l\notin\mathcal{P}_{j}(i+n,j+n)}x_{lj} & \le 1.\label{vc - FIFO2b}
\end{align}

\end{prop}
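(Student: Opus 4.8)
The plan is to prove all four inequalities \eqref{vc - FIFO1a}--\eqref{vc - FIFO2b} by one common argument, since they share an identical template: each bounds by one the sum of an \emph{anchor} arc variable (either $x_{ij}$ or $x_{i+n,j+n}$) and a sum of arc variables over the \emph{forbidden} neighbors of a single designated vertex. I will carry out the argument in detail for \eqref{vc - FIFO1a}; the other three are verbatim analogues, obtained by replacing the immediate successor of $i+n$ with the immediate predecessor of $j+n$, the successor of $i$, or the predecessor of $j$, by invoking the incoming-degree constraint \eqref{eq:c - incoming degree} in place of \eqref{eq:c - outgoing degree} wherever the designated neighbor is a predecessor, and by reading off the precedence order induced by the respective anchor arc.

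The first step is to split on the binary value of the anchor $x_{ij}$. If $x_{ij}=0$, then the left-hand side of \eqref{vc - FIFO1a} reduces to $\sum_{l\notin\mathcal{S}_{i+n}(i,j)}x_{i+n,l}$, a partial sum of the outgoing-degree constraint \eqref{eq:c - outgoing degree} at vertex $i+n$, which states $\sum_{l:(i+n,l)\in A}x_{i+n,l}=1$; hence this partial sum is at most one and the inequality holds. This disposes of the easy case and concentrates all the work in the case $x_{ij}=1$.

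For $x_{ij}=1$ it suffices to show that the sole immediate successor of $i+n$ (unique by \eqref{eq:c - outgoing degree}) belongs to $\mathcal{S}_{i+n}(i,j)$, so that $\sum_{l\notin\mathcal{S}_{i+n}(i,j)}x_{i+n,l}=0$ and the left-hand side equals $x_{ij}=1$. Here I would use the precedence order forced by $x_{ij}=1$ together with the conflict-free constraints. When $i,j\in P^4$ (the FIFO sub-case) the service sequence must satisfy $0\prec i\prec j\prec i+n\prec j+n\prec 2n+1$, so at the instant $i+n$ is completed the container of request $j$ is still aboard while that of $i$ has left; the FIFO constraint \eqref{eq:FIFO} then rules out every task whose service would violate FIFO ordering relative to $j$, and the pairing and same-queue precedence constraints \eqref{eq:c - completion time}--\eqref{eq:c - precedence in a pickup queue} rule out the rest, leaving precisely the members of $\mathcal{S}_{i+n}(i,j)$ admissible. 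For $i\in P^1,\,j\in P^4$ (the CLO sub-case) the same reasoning applies with the CLO constraint \eqref{eq:CLO} used in place of \eqref{eq:FIFO}, and the pairs with $i\in P^2\cup P^3,\,j\in P^3$ follow by the north--south symmetry that interchanges Types $1,3$ with Types $2,4$.

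The main obstacle is exactly this $x_{ij}=1$ case: the exhaustive check that each of the explicitly tabulated sets $\mathcal{S}_{i+n}(i,j)$, $\mathcal{P}_{j+n}(i,j)$, $\mathcal{S}_i(i+n,j+n)$ and $\mathcal{P}_j(i+n,j+n)$ is \emph{exactly} the admissible-neighbor set --- omitting no feasible task and admitting no infeasible one --- across all four type combinations and both the FIFO and CLO sub-cases. This membership bookkeeping is precisely the content of the derivation immediately preceding the proposition, so in the write-up I would present the two-case reduction in full and then appeal to that derivation (and to the parallel verification already carried out for \eqref{eq: vc - LIFO1a}--\eqref{eq: vc - LIFO2b}) for the per-type inclusions, rather than re-expanding every set equality.
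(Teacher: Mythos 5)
Your proposal is correct and follows essentially the same route as the paper: the paper's justification for the proposition is precisely the enumeration, given immediately before it, of the admissible successors of $i+n$, predecessors of $j+n$, successors of $i$ and predecessors of $j$ under the precedence order forced by the anchor arc together with the FIFO/CLO constraints, and your two-case split on the anchor variable (using the degree constraints \eqref{eq:c - outgoing degree}--\eqref{eq:c - incoming degree} when the anchor is zero) is the natural formalization of that argument. Note only that validity requires just the inclusion of the admissible neighbors in the tabulated sets, not the exact equality you mention, so the ``admitting no infeasible one'' direction is not actually needed for the inequalities to hold.
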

The next inequality is adapted from the one proposed for TSPPDF in \citep{cordeau2010branchFIFO}.
It comes from the fact that whenever an arc $(i,i+n)$ for any
$i\in P^{3}\cup P^{4}$ is traversed, it follows from the FIFO service
restriction that the vehicle must arrive empty at $i$ and leaves
empty from $i+n$.
\begin{prop}
For each of the vertex pairs $i\in P^{4},\,j\in P^{4}\backslash\{i\}$,
or $i\in P^{3},\,j\in P^{3}\backslash\{i\}$, the following inequality
holds for the PDP-TSLU:
\begin{equation}
x_{ji}+x_{i,i+n}+x_{i+n,j+n}\le1,\label{eq:vc - FIFO3a}
\end{equation}
where the variables are zero if their corresponding arcs are not defined
in A.
\end{prop}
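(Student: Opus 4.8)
The plan is to establish validity by proving that no two of the three arcs $(j,i)$, $(i,i+n)$, $(i+n,j+n)$ can occur together in any feasible route; since all variables are binary, pairwise incompatibility immediately forces $x_{ji}+x_{i,i+n}+x_{i+n,j+n}\le 1$. Throughout I fix $i$ and $j$ of the same crossing type (both in $P^{3}$, or both in $P^{4}$) with $i\ne j$, and write $\prec$ for the service order along the Hamiltonian route encoded by $x$, recalling that $x_{ab}=1$ makes $b$ the immediate successor of $a$ by \eqref{eq:c - outgoing degree}--\eqref{eq:c - incoming degree}.

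The key observation I would isolate first is that each of the three pairs forces the nested order $j\prec i\prec i+n\prec j+n$. For the pair $\{(j,i),(i+n,j+n)\}$ this is immediate: $x_{ji}=1$ gives $j\prec i$, $x_{i+n,j+n}=1$ gives $i+n\prec j+n$, and $i\prec i+n$ always holds by \eqref{eq:c - completion time}, so chaining the three relations yields the claim. For $\{(j,i),(i,i+n)\}$ I would use consecutiveness: the two arcs make $j,i,i+n$ three consecutive tasks, and since $j\prec j+n$ while $j+n$ cannot be inserted between two consecutive tasks, $j+n$ must follow $i+n$, again giving the nested order. The remaining pair $\{(i,i+n),(i+n,j+n)\}$ is symmetric, now forcing $j$ to precede $i$.

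Next I would turn the nested order into a violation of the FIFO constraint \eqref{eq:FIFO}. The order $j\prec i\prec i+n\prec j+n$ places both $i$ and $i+n$ strictly between $j$ and $j+n$ on the route. Since the flow variables $y^{j}$ trace, through \eqref{eq:c - pickups to deliveries} and \eqref{eq:c - decision consistency}, exactly the sub-route of the $x$-path running from $j$ to $j+n$, both $\sum_{(l,i)\in A'}y_{li}^{j}=1$ and $\sum_{(l,i+n)\in A'}y_{l,i+n}^{j}=1$. Because $i$ and $j$ share the same crossing type, the left-hand side of \eqref{eq:FIFO}, evaluated with $j$ as the conditioning request and $i$ as the other task, then equals $2$, contradicting its bound of $1$. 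Hence the nested order is infeasible, every pair of arcs is incompatible, and the inequality holds. This also matches the physical reading that motivates the statement: using $(i,i+n)$ forces the RGV to arrive empty at $i$ and leave empty from $i+n$, so the predecessor of $i$ cannot be the pickup $j$ and the successor of $i+n$ cannot be the delivery $j+n$.

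The step I expect to be the main obstacle is making the ``forced ordering'' argument airtight rather than merely plausible: I must show, using only the precedence relation \eqref{eq:c - completion time} together with the consecutiveness implied by the active $x$ variables, that no task---in particular $j+n$ or $j$---can be slipped inside a block of consecutive tasks, so that the nested order is genuinely unavoidable. Once that is pinned down, the reduction to \eqref{eq:FIFO} through the $y^{j}$ flow is routine and parallels the argument behind the LIFO/CLO inequality \eqref{eq:vc - LIFO3a}.
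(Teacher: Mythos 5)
Your proof is correct, and it is more complete than the justification the paper itself offers. The paper does not really prove this proposition: it only states (adapting \citep{cordeau2010branchFIFO}) that traversing the arc $(i,i+n)$ forces the RGV to arrive empty at $i$ and leave empty from $i+n$, which rules out the two pairs involving $x_{i,i+n}$ but says nothing about the remaining pair $\{(j,i),(i+n,j+n)\}$. You instead reduce all three pairwise incompatibilities to a single mechanism: each pair of arcs, combined with the precedence $i\prec i+n$ from \eqref{eq:c - completion time} and the fact that no task can be interposed between consecutive tasks, forces the nested order $j\prec i\prec i+n\prec j+n$, which puts both $i$ and $i+n$ on the $y^{j}$-subpath (forced by \eqref{eq:c - pickups to deliveries} and \eqref{eq:c - decision consistency}) and hence makes the left-hand side of \eqref{eq:FIFO} equal to $2$. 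This buys a uniform, self-contained argument tied directly to the model's constraints rather than to the physical emptiness intuition, and it closes the case the paper's sketch leaves open; the paper's phrasing, in exchange, is shorter and makes the physical origin of the cut transparent. The one point worth stating explicitly in a final write-up is the uniqueness of the $y^{j}$ flow along the Hamiltonian subpath (conservation at $j$ forces $y$ to equal $x$ on the successor arc, and so on until $j+n$), which you use implicitly when asserting that both incoming sums equal $1$.
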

The following inequality is again adapted from \citep{cordeau2010branchFIFO},
which merges the two inequalities $x_{ij}+x_{j+n,i+n}+x_{j+n,i}\le1$
and $x_{ij}+x_{j+n,i+n}+x_{i+n,j}\le1$ , for each pair $i$ and
$j$ in proper request sets.
\begin{prop}
For each of the vertex pairs $i\in P^{1}\cup P^{4},\,j\in P^{4}\backslash\{i\}$,
or $i\in P^{2}\cup P^{3},\,j\in P^{3}\backslash\{i\}$, the following
inequality holds for the PDP-TSLU:
\begin{equation}
x_{ij}+x_{j+n,i+n}+x_{j+n,i}+x_{i+n,j}\le1,\label{eq:vc - FIFO3b}
\end{equation}
where the variables are zero if the corresponding arcs are not defined
in A.
\end{prop}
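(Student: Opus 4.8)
The plan is to prove the four-term inequality \eqref{eq:vc - FIFO3b} by first establishing as valid the two three-term inequalities $x_{ij}+x_{j+n,i+n}+x_{j+n,i}\le1$ and $x_{ij}+x_{j+n,i+n}+x_{i+n,j}\le1$ quoted in the text, and then merging them using integrality together with a mutual-exclusivity fact about the two ``reversed'' arcs. Throughout I treat any variable whose arc is absent from $A$ as identically zero, so such terms never threaten the bound, and I rely only on the degree constraints \eqref{eq:c - outgoing degree}--\eqref{eq:c - incoming degree} and the precedence \eqref{eq:c - completion time}.

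First I would fix the precedence backbone. For every pair in the stated domains the service is governed by either FIFO ($i,j$ both in $P^{3}$ or both in $P^{4}$) or CLO ($i\in P^{1},j\in P^{4}$, or $i\in P^{2},j\in P^{3}$); in each sub-case, whenever $i$ and $j$ share the RGV the delivery order must be $i+n\prec j+n$, and combined with $i\prec i+n$ and $j\prec j+n$ this pins the feasible order to $i\prec j\prec i+n\prec j+n$ whenever $x_{ij}=1$. I would record three elementary facts: (i) the out-degree of $j+n$ is one, so $x_{j+n,i+n}$ and $x_{j+n,i}$ cannot both be one; (ii) the in-degree of $j$ is one, so $x_{ij}$ and $x_{i+n,j}$ cannot both be one; (iii) $x_{j+n,i}=1$ forces $j\prec j+n\prec i\prec i+n$ while $x_{i+n,j}=1$ forces $i\prec i+n\prec j$, and these orders contradict one another, so $x_{j+n,i}+x_{i+n,j}\le1$.

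Second I would verify the two three-term inequalities by pairwise exclusivity. The pair $x_{ij},x_{j+n,i+n}$ is excluded because $x_{ij}=1$ forces $i+n\prec j+n$, contradicting the reversed delivery order $j+n\prec i+n$ demanded by $x_{j+n,i+n}=1$; the pair $x_{ij},x_{j+n,i}$ is excluded because the former gives $i\prec j$ while the latter gives $j\prec j+n\prec i$; and $x_{j+n,i+n},x_{j+n,i}$ are excluded by fact (i). The second three-term inequality is symmetric, using fact (ii) for $x_{ij},x_{i+n,j}$ and the chain $j+n\prec i+n\prec j$ versus $j\prec j+n$ for $x_{j+n,i+n},x_{i+n,j}$. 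To merge, write $T=x_{ij}+x_{j+n,i+n}+x_{j+n,i}+x_{i+n,j}$, a nonnegative integer. If $x_{j+n,i}=1$, the first three-term inequality forces $x_{ij}=x_{j+n,i+n}=0$ and fact (iii) forces $x_{i+n,j}=0$, so $T=1$; symmetrically if $x_{i+n,j}=1$ the second inequality gives $T=1$; and if both reversed arcs vanish then $T=x_{ij}+x_{j+n,i+n}\le1$ by the first exclusivity. In every case $T\le1$, which is the claim.

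I expect the only delicate point to be the case bookkeeping in the backbone step: one must check that the delivery precedence $i+n\prec j+n$ really is enforced in all four sub-cases (two FIFO, two CLO), and that the ``both on board simultaneously'' hypothesis underlying the loading constraints is automatically satisfied once $x_{ij}=1$ places $i\prec j\prec i+n$. Once that backbone is secured, the remaining steps are purely combinatorial consequences of the degree and precedence constraints, and the merge follows immediately from integrality.
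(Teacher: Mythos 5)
Your proof is correct and follows essentially the same route as the paper, which justifies \eqref{eq:vc - FIFO3b} only by remarking that it merges the two three-term inequalities $x_{ij}+x_{j+n,i+n}+x_{j+n,i}\le1$ and $x_{ij}+x_{j+n,i+n}+x_{i+n,j}\le1$ adapted from the TSPPDF literature. You supply the details the paper omits --- the precedence backbone $i\prec j\prec i+n\prec j+n$ enforced by the FIFO/CLO constraints in all four sub-cases, the degree-constraint exclusions, and the mutual exclusivity of the two reversed arcs needed for the merge --- and each step checks out.
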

The valid inequalities (\ref{vc - lifted subtour elimination1})-(\ref{eq:vc - FIFO3b})
will be used to enhance the branch-and-cut algorithm embedded in
CPLEX for solving the PDP-TSLU, and their usefulness will be evaluated
via numerical experiments.

\begin{rem}
The above valid inequalities are not exclusive to the PDP-TSLU and other derivations are possible. For example, valid inequalities can also be derived from the CFI service restrictions in a similar way. Our experiments have shown that they do not contribute much to the computational efficiency, and hence are ignored in our studies. For another example, valid inequalities can be derived from the geometric
restriction of a linear track, known as the ``edge degree balance''
property \citep{atallah1987efficient}. The property says that, for
an RGV (which starts and ends at the same position) traversing any
track segment, the number of times it moves towards the right must
be equal to that towards the left. The property remains true for the
PDP-TSLU if a virtual arc is introduced to connect the end vertex
with the start vertex. Simulations have shown
that such valid inequalities hardly accelerate the solution process of PDP-TSLU and hence are ignored also.
\end{rem}

\section{Rolling-horizon approach for handling dynamic PDP-TSLU} \label{sec: rolling-horizon-appro}

In practice PD requests arrive stochastically and the PDP-TSLU becomes dynamic. Ideally the dynamic PDP-TSLU should be solved to optimality at each decision point as a static alternative which sequences all PD tasks up to the decision point. In reality, however, a computer has limited computational capacity and a suboptimal solution which considers a limited number of PD requests has to be
explored at each decision point. This motivates us to adopt a rolling-horizon approach for handling the dynamic PDP-TSLU. To that end, we need to treat PDP-TSLU with nonzero
initial load first.

\subsection{Handling nonzero initial conditions}

At the time of recomputing a routing solution, the RGV may contain
containers which have not been delivered, \ie, $w_{0}\ge1$. In this case,
the re-optimization is subject to a nonzero initial condition. The
initial load on the RGV may consist of containers that are destined to
stations on the same or different sides of the track, as depicted in
Figure \ref{fig:Initial requests}, where the impossible types
of initial requests are avoided by the previous routing solution.

\begin{figure}
\begin{centering}
\includegraphics[scale=0.7]{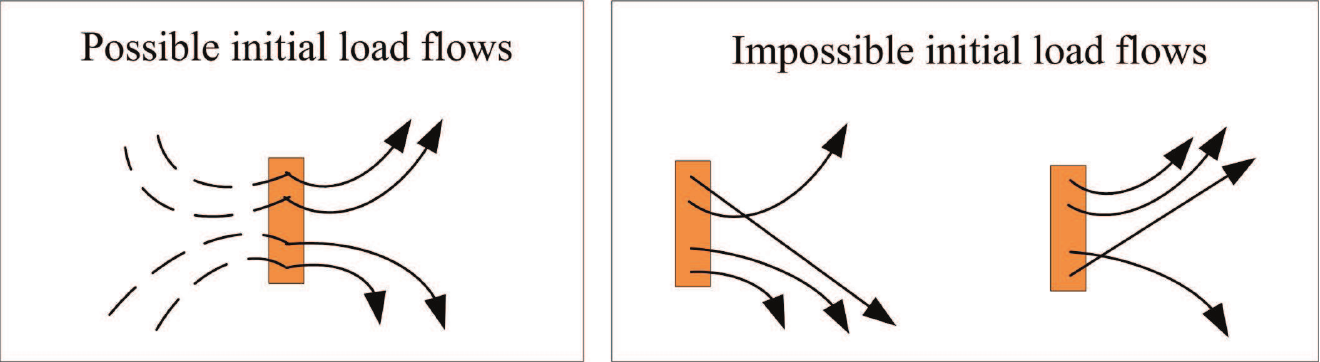}
\par\end{centering}
\caption{Initial delivery requests and their completion as virtual PD requests.
\label{fig:Initial requests}}
\end{figure}

To recompute a routing decision, extra constraints are required for handling new requests without causing conflict with the existing load. Characterizing these constraints
and incorporating them into the present routing model are thus necessary.
The renewed problem can be reformulated as a standard one with zero initial load and partially determined arcs, as explained below.

Let there be $n_{0}$ delivery requests at the time when a
renewal of routing is triggered, of which $n_{0,1}$ are destined
to north stations and $n_{0,2}$ (equal to $n_{0}-n_{0,1}$)
to south stations. In the graph model, it is feasible
to represent the $n_{0,1}$ transport requests by $n_{0,1}$ virtual PD requests of Type-1, each associated with a travel distance equal to the current distance of the RGV to the
destined station (because visiting a virtual pickup vertex does not introduce a routing cost).
Similarly, the rest $n_{0,2}$ transport requests can be represented
by $n_{0,2}$ virtual PD requests of
Type-2. See Figure \ref{fig:Initial requests} for a depiction of
the virtual PD requests, as indicated by arrows comprised of dashed
and solid lines. In the meantime, the current position of the RGV is represented
by a virtual start vertex as directed to one of the virtual
pickup vertices with zero routing cost. The order of visiting the virtual
start and pickup vertices is specified in such a way that yields the initial load setting.

The routing model is then augmented to include the initial delivery
vertices, the virtual pickup and start vertices, and the
arcs associated. Denote the set of virtual pickup vertices of
Type-1 as $P_{0}^{1} \triangleq \{1, 2, \dots, n_{0,1}\}$, and those of Type-2 as $P_{0}^{2} \triangleq \{n_{0,1}+1,n_{0,1}+2, \dots, n_0\}$, and let $P_{0}\triangleq P_{0}^{1} \cup P_{0}^{2}$, as a subset of the augmented pickup vertex set $P$. Without loss of generality, we let the RGV start at the virtual start vertex 0 and visit the virtual pickup vertices in the order of $1, 2, \dots, n_0$,
resulting in the initial load setting. The arcs
associated with these virtual vertices ($\{0\}\cup P_{0}$)
are thus known a priori and the related arc set reduces to
\begin{align*}
A_{0} & = \left\{ (k, k+1):\,k=0, 1, \dots, n_0-1 \right\} \\
&  \quad \cup\left\{ (n_0, j):\, j \in P^{1} \cup P^{2} \cup \left\{ n_{0,1}+h, n_{0,2}+h \right\}\right\}\\
 &  \quad \cup\left\{ (n_0, j): j \in P^{3}, \text{ if } n_{0,1}=0 \right\}\\
  &  \quad \cup\left\{ (n_0, j): j \in P^{4}, \text{ if } n_{0,2}=0 \right\} ,
\end{align*}
where $h$ is a given size of the rolling horizon, satisfying $h \ge n_0$. All arcs defined in $A$ directing to $\{0\}\cup P_{0}$
become null since all these vertices have already been visited. For each arc $(i,j)$
in the first subset of $A_{0}$ above, the arc distance $r_{ij}$ is equal to zero; and for each arc $(i,j)$
in the rest subsets of $A_{0}$ above, $r_{ij}$ is equal to the distance
of the current position of the RGV to vertex $j$.

Consequently, the RGV routing problem with nonzero
initial load is transformed into a PDP-TSLU with partially determined arcs, in which an empty RGV starts at a virtual
vertex 0, visits the virtual pickup vertices in a predefined order,
and then the rest pickup and delivery vertices within the rolling
horizon by following an order determined by solving the
PDP-TSLU.

\subsection{The rolling-horizon approach}

As PD requests arrive stochastically in a complex manner, we adopt a rolling-horizon approach
to handling the dynamic PDP-TSLU. To make the approach work in real time,
we need to address two issues: One is to select an appropriate
rolling-horizon size and the other is about online implementation
of the routing decisions. In principle, the rolling horizon should be as long
as possible in order to avoid myopic decisions, but in the meantime it must be short enough to afford real-time decisions. The size of the horizon is thus limited by the computational time required
for solving a static PDP-TSLU, and the specific size needs to be determined
via simulations. Regarding the implementation of rolling decisions,
two situations have to be handled appropriately: One is when re-optimization
is demanded while the previous optimization is still in progress; and
the other is when a new decision is available while the RGV is on
the way of executing the previous decision. With these situations
in mind, the rolling-horizon approach is designed to work as follows.

\emph{Rolling-horizon approach}: Whenever a new PD request is issued or there are un-sequenced PD
tasks in the system, the approach recomputes a solution of sequencing of
PD requests by solving the (augmented) PDP-TSLU with a given horizon size (if there were so many requests),
say, $h$. If there are more than $h$ PD requests
to sequence, $h$ of them are selected based on the ``Earliest Due
First'' rule while satisfying the FIFO service order at each station.
In this way, the RGV keeps serving PD tasks as sequenced by
the previous decision until a new decision comes out. In addition,
executing the first task of each decision is made compulsory and a new
decision is adopted only after the RGV completes a current task of the
previous decision.

Following this approach, an RGV is able to handle PD requests continuously
without obvious disruption although the decision is renewed over
time. This is true as long as the computation of a new decision
can be finished before the RGV completes all requests in the previous horizon.

\section{Computational studies} \label{sec: computational studies}

A toy example is first given to illustrate the modeling and solution
of a static PDP-TSLU. Then computational results of static PDP-TSLUs with
randomly generated requests of various sizes are presented to evaluate
the usefulness of the valid inequalities for solving the PDP-TSLUs. Based
on the results, an appropriate horizon size is selected for
implementing the rolling-horizon approach. Then the rolling-horizon
approach is applied to handle dynamic PDP-TSLUs of random instances, and the results are compared with those obtained with a typical rule-based method.

The basic scenario settings of the static and dynamic PDP-TSLUs are as follows. In each instance, the PD requests are randomly generated within
a work area. Specifically, for each static instance a queue
of PD requests at any station on both sides of the track
are generated with a size following a uniform distribution within
$[0,a]$, where $a$ is a given integer which is different for
instances of different sizes. For each dynamic instance, the arrival
time of a new PD request follows a Poisson distribution and its location
follows a uniform distribution within the range of stations. The distance
between two neighboring stations is treated as a unit distance, and a
unit service time is assumed for each PD request as equally divided
between the pickup and the delivery operations. In our studies, an
RGV is assumed to have a constant mass equal to two units of load.
The travel distance of an RGV is defined as the total distance it
travels before the last delivery, and the energy cost of
an RGV is defined as per (\ref{eq: obj}). The friction
coefficient of the track is set as 0.05 and the gravitational acceleration as 9.8
$m/s^{2}$. The problems are coded in C++ and solved via
IBM ILOG CPLEX 12.4 \citep{studio2011v124} which runs on a PC with
Intel Core2 Duo CPU @ 2.66 GHz and 2 GB of RAM.

\subsection{A toy example of static PDP-TSLU}

Consider seven PD requests distributed in nine pairs of face-to-face
stations as shown in Figure \ref{fig: toy example - scenario}, whose graph representation is shown in Figure \ref{fig: toy_example}.
In the representation, pickup vertices sit in FIFO queues while
delivery vertices do not follow any precedence order and are represented
to sit side by side of each other. The RGV starts from location 3.

\begin{figure}
\centering
        \begin{subfigure}[b]{0.4\textwidth}
                \includegraphics[width=\textwidth]{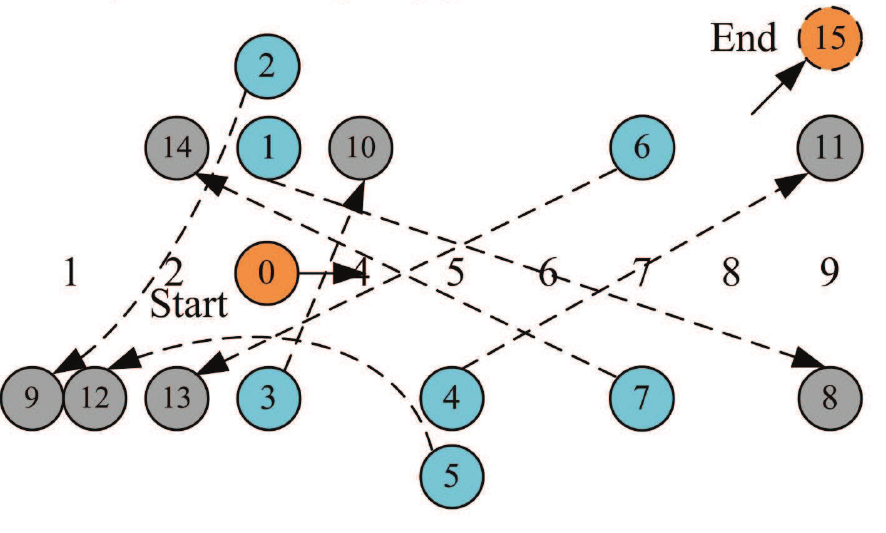}
                \caption{\footnotesize RGV starting with empty load.}
                \label{fig: toy_example_a}
        \end{subfigure}
        \begin{subfigure}[b]{0.4\textwidth}
                \includegraphics[width=\textwidth]{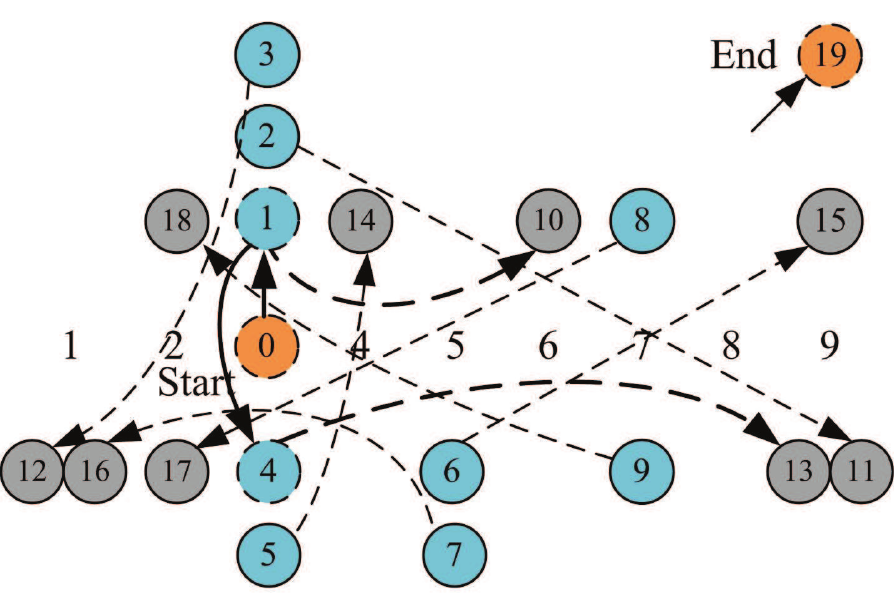}
                \caption{\footnotesize RGV starting with two units of load.}
                \label{fig: toy_example_b}
        \end{subfigure}
\caption{Graph representations of scenarios with empty and nonempty initial
load.}
\label{fig: toy_example}
\end{figure}

Two scenarios are investigated: (a) the RGV starts with empty load, and
(b) the RGV starts with two units of load on board. The RGV is simulated with different capacities. The numerical results of scenario (a) are summarized in part
(a) of Table \ref{tab:Toy example}. We observe that
the energy cost drops considerably once the capacity increases from
1 to 2, but it ceases dropping as the RGV's capacity becomes larger. This is because the service restrictions and the weight-dependent
energy consumption are to prevent full loading and hence limit the benefit
of having a larger capacity. The results also indicate that there can be
multiple solutions for achieving the same energy cost. In
comparison, minimizing travel distance results in higher energy costs for all RGV capacities tested, whose values are 44.59, 52.43, 52.43, 48.51 and 52.43 for the capacity equal to $2, 3, \dots, 6$, respectively (while achieving travel distances of 30, 29, 29, 29 and 29, correspondingly).

In scenario (b), the RGV starts with two units of load
on board, one to be delivered to the north station at location 6
and the other to the south station at location 9. The other PD requests
keep the same as in scenario (a). By introducing a virtual start vertex
(0) and two virtual pickup vertices (1 and 4), the graph model is
augmented as in Figure \ref{fig: toy_example_b}. By solving the
augmented PDP-TSLU with zero initial load and partially determined
arcs under five different RGV capacities, it yields the computational results summarized in part (b) of Table \ref{tab:Toy example}.
The routing solutions turn out to be the same for the five choices
of capacity, resulting in a total energy cost of 55.86. In comparison, minimizing travel distance gives total
energy costs of 56.84, 62.72, 66.64, 66.64 and 64.68 for the five capacities, respectively
(all achieving a travel distance of 36), which are higher
than the aforementioned minimum counterparts obtained by solving the PDP-TSLUs.

\begin{table*}
{\small \caption{{\footnotesize Computational results of the toy example.\label{tab:Toy example}}}
}{\small \par}

\centering{}%
{\scriptsize
\begin{tabular}{c>{\centering}p{1.5cm}>{\centering}p{1cm}>{\centering}p{1.2cm}>{\centering}p{1.4cm}>{\centering}p{5.cm}}
\hline
\noalign{\vskip3pt}
{Case} & { RGV's capacity} & { Energy\linebreak cost} & { Travel distance} & { Completion time} & { Vertex service order}\tabularnewline[3pt]
\hline
\noalign{\vskip3pt}
\multirow{3}{*}{{ (a)}} & { 1} & { 50.47} & { 38} & { 63.78} & { 4$\rightarrow$11$\rightarrow$7$\rightarrow$14$\rightarrow$3$\rightarrow$10$\rightarrow$5$\rightarrow$12$\rightarrow$1\linebreak$\rightarrow$8$\rightarrow$6$\rightarrow$13$\rightarrow$2$\rightarrow$9}\tabularnewline[3pt]
\noalign{\vskip3pt}
 & { 2/3/4/6} & { 42.63} & { 30} & { 55.64} & { 3$\rightarrow$10$\rightarrow$4$\rightarrow$11$\rightarrow$7$\rightarrow$5$\rightarrow$14$\rightarrow$12$\rightarrow$1\linebreak$\rightarrow$8$\rightarrow$6$\rightarrow$2$\rightarrow$13$\rightarrow$9}\tabularnewline[3pt]
\noalign{\vskip3pt}
 & { 5} & { 42.63} & { 30} & { 55.64} & { 1$\rightarrow$8$\rightarrow$6$\rightarrow$2$\rightarrow$13$\rightarrow$9$\rightarrow$3$\rightarrow$10$\rightarrow$4\linebreak$\rightarrow$11$\rightarrow$7$\rightarrow$5$\rightarrow$14$\rightarrow$12}\tabularnewline[3pt]
\noalign{\vskip3pt}
\multirow{1}{*}{{ (b)}} & { 2/3/4/5/6} & { 55.86} & { 36} & { 65.58} & { 1$\rightarrow$4$\rightarrow$10$\rightarrow$2$\rightarrow$13$\rightarrow$11$\rightarrow$8$\rightarrow$3$\rightarrow$17\linebreak$\rightarrow$12$\rightarrow$5$\rightarrow$14$\rightarrow$6$\rightarrow$15$\rightarrow$9$\rightarrow$7$\rightarrow$18$\rightarrow$16}\tabularnewline[3pt]
\hline
\end{tabular}}
\end{table*}

\subsection{Evaluating the valid inequalities and selecting a size for the rolling
horizon}

This subsection evaluates the usefulness of the valid inequalities
introduced in Section \ref{sec: Valid-constraints} for solving the
PDP-TSLU and meanwhile determines an appropriate size for the rolling
horizon to enable online decisions in a dynamic environment.

The valid inequalities (\ref{vc - subtour elimination})-(\ref{eq:vc - FIFO3b})
are added as redundant constraints to the PDP-TSLU. More precisely,
the valid inequalities (\ref{vc - subtour elimination}) with a subtour
size of 2 are added, and the special cases of constraints (\ref{vc - lifted subtour elimination1})-(\ref{eq:vc - Dkb})
are added:
\begin{itemize}
\item the constraints (\ref{vc - lifted subtour elimination1}) with $S$
equal to $\{i,j\}$, $\{i,j+n\}$, and $\{i,i+n,j\}$,
and the constraints (\ref{vc - lifted subtour elimination2}) with
$S$ equal to $\{i+n,j+n\}$, $\{i,j+n\}$, and $\{i,i+n,j+n\}$;
\item the constraints (\ref{eq:vc - Dka}) and (\ref{eq:vc - Dkb}) with
$k=3$: $x_{i+n,j}+x_{ji}+x_{i,i+n}+x_{j+n,i+n}+2x_{j,i+n}\le2$
and $x_{i,i+n}+x_{i+n,j+n}+x_{j+n,i}+x_{ij}+2x_{i,j+n}\le2$.
\end{itemize}
These simple constraints were once used in solving TSPPDL and TSPPDF
\citep{cordeau2010branch,cordeau2010branchFIFO}. In the meantime, the valid inequalities (\ref{eq: vc - LIFO1a})-(\ref{eq:vc - FIFO3b}) unique to PDP-TSLU
each has a polynomial cardinality and are added as redundant constraints
to the PDP-TSLU directly.

For convenience of investigation, we classify the valid inequalities (\ref{vc - subtour elimination})-(\ref{eq:vc - Dkb}) inherited from a generic PDP as \textit{Group 1}, and
(\ref{eq: vc - LIFO1a})-(\ref{eq:vc - LIFO3b}) derived from LIFO and CLO service restrictions as \textit{Group 2}, and (\ref{vc - FIFO1a})-(\ref{eq:vc - FIFO3b}) derived from FIFO and CLO service restrictions as \textit{Group
3}. The usefulness of including various combinations of these three groups
of valid inequalities for solving PDP-TSLU with/without TW constraints
is evaluated via 20 random instances. In each instance, the work area
consists of 10 pairs of face-to-face stations and the TW constraints (if present)
are generated as delivery deadlines. The deadline for request $i$
is a random variable following a uniform distribution within $[T_{i}-15,\,T_{i}]$,
where $T_{i}$ is the completion time of request $i$ for the same
instance in the absence of TW constraints.

The average CPU times to solve the instances are summarized in Table \ref{tab:test valid inequalities}. We observe that in general, use of any of the three groups of valid inequalities is able to reduce the computational time, which is however least likely when valid inequalities of Group 1 are applied. This implies that the inequalities inherited from a generic PDP is not as useful as those derived from the specific PDP-TSLU, and hence the single inclusion of the Group 1 inequalities is not recommended. Instead, a combination of the 1-3 or 2-3 groups of valid inequalities is able to reduce the computational time in both simulation cases, either with or without TW constraints.

\begin{table*}
{\footnotesize \caption{{ CPU solution time (in second) when different groups (grps.)
of valid inequalities were used.\label{tab:test valid inequalities}}}
}{\footnotesize \par}

\centering{}%
{\scriptsize
\begin{tabular}{>{\raggedright}m{1.4cm}|>{\raggedright}m{1.4cm}ccccccccccccc}
\hline
\noalign{\vskip3pt}
 & {$\left|P\right|$} & \multicolumn{2}{c}{{7}} &  & \multicolumn{2}{c}{{8}} &  & \multicolumn{2}{c}{{9}} &  & \multicolumn{2}{c}{{10}} &  & \multirow{2}{*}{\textit{Average}}\tabularnewline[3pt]
\cline{3-4} \cline{6-7} \cline{9-10} \cline{12-13}
\noalign{\vskip3pt}
 & {$Q$} & {2} & {4} &  & {2} & {4} &  & {2} & {4} &  & {2} & {4} &  & \tabularnewline[3pt]
\hline
\noalign{\vskip3pt}
 & {None} & {9.5} & {17.7} &  & {42.9} & {198.6} &  & {95.1} & {438.8} &  & {316.2} & {1120.2} &  & \textit{279.9}\tabularnewline[3pt]
\noalign{\vskip3pt}
\multirow{1}{1.4cm}{} & \multirow{1}{1.4cm}{{Grp. 1}} & {9.2} & {16.9} &  & {38.8} & {151.5} &  & {111.1} & {658.3} &  & {208.6} & {2061.7} &  & \textit{407.0}\tabularnewline[3pt]
\noalign{\vskip3pt}
\multirow{1}{1.4cm}{{Without TW cons.}} & \multirow{1}{1.4cm}{{Grp. 2}} & {9.0} & {18.0} &  & {26.6} & {131.1} &  & {110.0} & {352.1} &  & {226.0} & {1179.6} &  & \textit{256.6}\tabularnewline[3pt]
\noalign{\vskip3pt}
\multirow{1}{1.4cm}{} & \multirow{1}{1.4cm}{{Grp. 3}} & {6.0} & {18.4} &  & {26.2} & {134.3} &  & {87.6} & {295.6} &  & {227.0} & {1000.9} &  & \textit{224.5}\tabularnewline[3pt]
\noalign{\vskip3pt}
\multirow{1}{1.4cm}{} & \multirow{1}{1.4cm}{{Grps. 2-3}} & {6.6} & {17.0} &  & {31.3} & {132.8} &  & {82.4} & {237.9} &  & {272.2} & {645.7} &  & \textit{178.2}\tabularnewline[3pt]
\noalign{\vskip3pt}
 & {Grps. 1-3} & {7.5} & {16.1} &  & {35.7} & {144.2} &  & {95.1} & {414.1} &  & {283.9} & {972.1} &  & \textit{246.1}\tabularnewline[3pt]
\hline
\noalign{\vskip3pt}
 & {None} & {9.5} & {19.3} &  & {42.3} & {185.5} &  & {149.2} & {837.6} &  & {387.9} & {2691.6} &  & \textit{540.4}\tabularnewline[3pt]
\noalign{\vskip3pt}
\multirow{1}{1.4cm}{} & \multirow{1}{1.4cm}{{Grp. 1}} & {10.8} & {21.6} &  & {44.7} & {200.1} &  & {207.8} & {533.0} &  & {530.1} & {2301.6} &  & \textit{481.2}\tabularnewline[3pt]
\noalign{\vskip3pt}
\multirow{1}{1.4cm}{} & \multirow{1}{1.4cm}{{Grp. 2}} & {8.8} & {18.9} &  & {39.3} & {201.4} &  & {135.9} & {596.8} &  & {400.8} & {1801.6} &  & \textit{400.4}\tabularnewline[3pt]
\noalign{\vskip3pt}
\multirow{1}{1.4cm}{{With TW cons.}} & \multirow{1}{1.4cm}{{Grp. 3}} & {6.5} & {19.4} &  & {31.4} & {207.5} &  & {110.7} & {600.0} &  & {317.7} & {1079.5} &  & \textit{296.6}\tabularnewline[3pt]
\noalign{\vskip3pt}
\multirow{1}{1.4cm}{} & \multirow{1}{1.4cm}{{Grps. 2-3}} & {7.3} & {20.0} &  & {34.6} & {165.3} &  & {167.8} & {790.4} &  & {286.1} & {1776.2} &  & \textit{406.0}\tabularnewline[3pt]
\noalign{\vskip3pt}
\multirow{1}{1.4cm}{} & \multirow{1}{1.4cm}{{Grps. 1-3}} & {7.1} & {18.5} &  & {37.4} & {173.9} &  & {146.6} & {668.1} &  & {266.9} & {1086.9} &  & \textit{300.7}\tabularnewline[3pt]
\hline
\end{tabular}}
\end{table*}

The simulation results also indicate that using a rolling-horizon size of
eight would allow the control system to recompute a routing solution within
one minute when the RGV has a capacity of 2 and about three minutes
when the RGV has a capacity of 4. This suggests that a rolling-horizon
size of eight is reasonable for rendering
online routing solutions in a dynamic environment.

\subsection{Evaluating the proposed approach for handling dynamic PDP-TSLU}

Random instances with and without TW constraints are generated to evaluate
the rolling-horizon approach for treating dynamic PDP-TSLUs. In each instance, 50 PD requests are randomly generated in a range of 20 pairs of face-to-face stations. With each request is associated an
arrival time which follows a Poisson distribution with a mean of 0.5
as simulates the stochastic arrival practice.

In the case with TW constraints, each PD request is associated with
a delivery deadline, tight or loose.
Tight deadlines follow a uniform distribution in the range of [50,
80], and loose deadlines follow a uniform distribution
in the range of [150, 200]. In each
pickup queue the request joining first is assigned with a closer deadline. Thus the deadline in each pickup queue is sorted in an increasing
order from head to the rear. To simulate schedulable practice, one out
of three of the PD requests are imposed with tight deadlines and the
others with loose ones.

In each instance, the total energy cost is computed by applying each of the two routing approaches: the rolling-horizon
approach and a rule-based approach. The rolling-horizon approach
was introduced in Section \ref{sec: rolling-horizon-appro}, and is implemented with a rolling-horizon size of eight and by including valid inequalities
of Groups 1-3 into the problem model. In contrast,
the rule-based approach is heuristic and it works as follows.

\textit{Rule-based approach}: In the absence of TW constraints, all PD requests are queued in each station by their arrival times. The pickup service follows the priority rule of ``Earliest
Arrival First'', in which the RGV will first load
the container with the earliest arrival time and follow up with next
arrival container without causing unloading conflict with previously loaded
containers, until the RGV is full. The RGV will then deliver all loaded
containers before picking up any new ones. If delivery requests are assigned
with TW constraints, the priority rule base is shifted from arrival
time to deadline to ensure compliance of the TW constraints. The popular heuristics
of ``Earliest Due First'' is thus adopted,
in which the RGV keeps loading containers with closest deadlines as long as no service conflict arises until the RGV is full. Similarly, the RGV will
not pick up new containers until all loaded ones are delivered.
The complete priority rule mimics the method used in the current AFHS under consideration, and similar
rules had been reported in the literature \cite{roodbergen2009survey,lee1999dispatching}.
This rule-based approach updates the routing decision whenever a new PD request is issued,
and the simple algorithm is able to recompute a routing decision
in real time for a problem having up to fifty requests.

The two approaches are applied to solving random instances of dynamic
PDP-TSLUs without and with TW constraints. The
average simulation results are summarized in Table \ref{tab:dynamic-result-Energy-costs-and}.
We observe that, compared to the rule-based approach, the rolling-horizon approach is able to reduce energy cost by up to 15\%. However, the percentage saving decreases as the RGV's capacity increases or when TW constraints are imposed on the PD requests. We also note that the rolling-horizon approach
has an extra advantage of being able to render feasible decisions
in the presence of TW constraints, in which case the rule-based approach
is often hard to offer.

Another interesting observation is that, when the RGV's capacity
is increased from 2 to 3, both approaches achieve lower energy consumption, but the marginal energy savings are
much less than the naive estimate of $\left|\frac{\nicefrac{n}{3}-\nicefrac{n}{2}}{\nicefrac{n}{2}}\right|\times100\%\thickapprox33.33\%$.
This occurs because the conflict-free service constraints and the weight-dependent energy consumption prevent the RGV exploiting its full capacity. The insight
can be useful to warehouse designers, helping them determine appropriate
capacities for RGVs which work in a similar environment by evaluating
the trade-off between extra construction and operational cost it would
take and the actual benefit it would bring.

\begin{table*}
{ \caption{{ \label{tab:dynamic-result-Energy-costs-and}Total energy costs
for operating an RGV.}}
}{ \par}

\centering{}%
{\scriptsize
\begin{tabular}{lccccccccccc}
\hline
\noalign{\vskip3pt}
\multirow{2}{*}{} & \multicolumn{5}{c}{{ Energy cost (without TW constraints)}} &  & \multicolumn{5}{c}{{ Energy cost (with TW constraints)}}\tabularnewline[3pt]
\cline{2-6} \cline{8-12}
\noalign{\vskip3pt}
 & \multicolumn{2}{>{\centering}p{1.6cm}}{{ Rule-based\linebreak approach}} &  & \multicolumn{2}{>{\centering}p{1.9cm}}{{ Rolling-horizon\linebreak approach}} &  & \multicolumn{2}{>{\centering}p{1.6cm}}{{ Rule-based\linebreak approach}} &  & \multicolumn{2}{>{\centering}p{1.9cm}}{{ Rolling-horizon\linebreak approach}}\tabularnewline[3pt]
\cline{2-3} \cline{5-6} \cline{8-9} \cline{11-12}
\noalign{\vskip3pt}
{ Instance} & { $Q=2$} & { $Q=3$} &  & { $Q=2$} & { $Q=3$} &  & { $Q=2$} & { $Q=3$} &  & { $Q=2$} & { $Q=3$}\tabularnewline[3pt]
\hline
\noalign{\vskip3pt}
{ a} & { 1335.7} & { 1222.0} &  & { 1067.2} & { 910.4} &  & { 1386.8} & { 1352.4} &  & { 1173.6} & { 1076.0}\tabularnewline[3pt]
\noalign{\vskip3pt}
{ b} & { 1324.0} & { 1169.7} &  & { 1112.9} & { 1128.6} &  & { 1504.3} & { 1239.7} &  & { 1135.8} & { 1214.3}\tabularnewline[3pt]
\noalign{\vskip3pt}
{ c} & { 571.3} & { 583.7} &  & { 552.7} & { 477.3} &  & { 592.9} & { 595.9} &  & { 569.4} & { 505.7}\tabularnewline[3pt]
\noalign{\vskip3pt}
{ d} & { 618.4} & { 555.7} &  & { 516.5} & { 464.8} &  & { 651.7} & { 607.5} &  & { 559.6} & { 471.5}\tabularnewline[3pt]
\noalign{\vskip3pt}
{ e} & { 641.9} & { 563.5} &  & { 567.4} & { 540.0} &  & { 645.9} & { 571.3} &  & { 639.9} & { 561.3}\tabularnewline[3pt]
\noalign{\vskip3pt}
\multicolumn{1}{l}{\textit{ Average}} & \textit{ 898.3} & \textit{ 818.9} &  & \textit{ 763.3} & \textit{ 704.2} &  & \textit{ 956.3} & \textit{ 873.4} &  & \textit{ 815.7} & \textit{ 765.8}\tabularnewline[3pt]
\noalign{\vskip3pt}
\multicolumn{2}{l}{\textit{ Percentage saving}} &  &  & \textit{ 15.03\%} & \textit{ 14.01\%} &  &  &  &  & \textit{ 14.7\%} & \textit{ 12.32\%}\tabularnewline[3pt]
\hline
\end{tabular}}
\end{table*}

\section{Conclusions} \label{sec: conclusions}

This work studied an RGV routing problem in an automated freight handling system. The problem was formulated as an MILP that aims to minimize energy consumption for an RGV to complete pickup and delivery tasks under conflict-avoidance and time window constraints. The energy consumption takes account of routing-dependent gross weight and also dynamics of the RGV, and the conflict-avoidance constraints guarantee conflict-free service under two-sided loading/unloading operations. Arc reduction and valid inequalities were exploited from the problem structure to enhance the MILP solution process. The static problem model and solution approach were integrated with a rolling-horizon approach to handle
the dynamic routing problem where air cargo enters and departs from the system dynamically in time. Numerical experiments suggest that the proposed routing strategy is able to route an RGV to transport cargo with an energy cost up to 15\% lower than a heuristic method implemented in current practice.

The current routing problem assumes a single RGV serving the system. In practice, however, multiple RGVs may be employed which work on a single track or different tracks that handle coupled transport tasks, and meanwhile a container may be relayed to its destination via multiple RGVs. The routing problem thus becomes more complex, and collective routing of the RGVs and containers will be required for achieving optimal energy efficiency under service quality and feasibility constraints. This constitutes interesting and challenging work for the future research.

\section*{Acknowledgment}

This work was supported in part by ATMRI under Grant M4061216.057, by MOE AcRF under Tier 1 grant RG 33/10 M4010492.050 and by NTU under startup grant M4080181.050.

\singlespacing
\section*{Appendices}
{\footnotesize
\renewcommand{\thesubsection}{\Alph{subsection}}

\subsection{Equivalent forms of the CFI and CLO service constraints} \label{apdix: CFI-alternative}
\begin{lem}
The following CFI service constraints are equivalent:
\begin{align*}
\text{(See (\ref{eq:CFI}))} \sum_{i:(i,j)\in A'}y_{ij}^{k}=0,\,\,&\forall j\in P^{3},\, k\in P^{1}\\
\Longleftrightarrow\sum_{i:(i,j+n)\in A'}y_{i,j+n}^{k}\le\sum_{i:(i,j)\in A'}y_{ij}^{k},\,\,&\forall j\in P^{1},\, k\in P^{3},
\end{align*}
and the following CLO service constraints are equivalent:
\begin{align*}
(\text{See (\ref{eq:CLO})}) \sum_{i:\,(i,\, j+n)\in A'}y_{i,\, j+n}^{k}=0, \quad &\forall j\in P^{4}, k\in P^{1} \\
\Longleftrightarrow \sum_{i:\,(i,\, j)\in A'}y_{ij}^{k}\le\sum_{i:\,(i,\, j+n)\in A'}y_{i,\, j+n}^{k}, \quad &\forall j\in P^{1}, k\in P^{4}.
\end{align*}
\end{lem}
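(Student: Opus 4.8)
The plan is to recast both claimed equivalences as statements about the relative order of vertices on the single route encoded by the $x$-variables, so that the two constraint families can be compared directly. For a fixed request $k$ write $\phi_j^k:=\sum_{i:(i,j)\in A'}y_{ij}^k$ for the inflow of the $k$-th commodity into vertex $j$. By the flow-conservation constraint \eqref{eq:c - pickups to deliveries}, the $y^k$-arcs carry exactly one unit of flow from $k$ to $k+n$ along the common route, so $\phi_j^k\in\{0,1\}$ and $\phi_j^k=1$ precisely when $j$ is visited strictly after $k$ and no later than $k+n$; writing $v\prec w$ for ``$v$ precedes $w$ on the route,'' this reads $\phi_j^k=1\Leftrightarrow k\prec j\preceq k+n$. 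In this language the left-hand side of the CFI equivalence \eqref{eq:CFI} becomes ``no Type-3 pickup lies strictly between a Type-1 pickup and its paired delivery,'' i.e. $\neg(k\prec j\prec k+n)$ for all $k\in P^1,\ j\in P^3$, while the right-hand side becomes $\phi_{j+n}^k\le\phi_j^k$ for all $j\in P^1,\ k\in P^3$. The CLO pair translates the same way with $P^4$ and the delivery vertices playing the symmetric role.

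First I would establish the forward implication. Fix $j\in P^1$ and $k\in P^3$ and suppose $\phi_{j+n}^k=1$, so $k\prec j+n\prec k+n$ (the case $j+n=k+n$ is excluded since $j\neq k$). Applying the left-hand condition to the Type-1 request $j$ and the Type-3 pickup $k$ forbids $j\prec k\prec j+n$; together with $j\prec j+n$ this forces $k\prec j$ or $k\succ j+n$, and $k\succ j+n$ contradicts $k\prec j+n$, leaving $k\prec j$. Since also $j\prec j+n\prec k+n$ gives $j\prec k+n$, we obtain $k\prec j\prec k+n$, i.e. $\phi_j^k=1$, which is the desired inequality. The CLO case follows by the mirror-image argument, exchanging the roles of pickups and deliveries and of the north/south labels, so only routine relabelling is needed there.

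The converse implication is where I expect the real difficulty, and it is the step that must use the precedence and single-route structure most carefully. I would argue by contraposition: if the equality \eqref{eq:CFI} fails there are $k\in P^1,\ j\in P^3$ with $k\prec j\prec k+n$, and the aim is to produce a pair violating $\phi_{j+n}^{k}\le\phi_{j}^{k}$. The benign case $k\prec j\prec k+n\prec j+n$ yields $\phi_{j+n}^{k}=1$ but $\phi_{j}^{k}=0$ and is done immediately; the delicate case is the \emph{nested} configuration $k\prec j\prec j+n\prec k+n$, in which the crossing request sits entirely inside the Type-1 interval and the reversed inequality is not directly contradicted. Closing the proof therefore hinges on showing that this nested pattern cannot survive in a feasible route—physically, a crossing container loaded after, and unloaded before, a same-side non-crossing container can never reach its exit side—so that every genuine violation reduces to the benign form above. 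Making that exclusion rigorous from the model's loading and precedence constraints, rather than from the informal two-sided-deck picture, is the main obstacle; once it is in place the rest is the same order bookkeeping as in the forward direction, and the CLO equivalence again follows symmetrically.
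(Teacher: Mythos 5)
Your order-theoretic setup and your forward implication are sound and carry the same content as the paper's ``$\Rightarrow$'' step (the paper argues the contrapositive of your direct argument, but it is the same bookkeeping). One small blemish: in your ``benign case'' the violating pair should be written with the Type-3 request as the superscript, i.e.\ $\sum_{i:(i,k+n)\in A'}y^{j}_{i,k+n}=1>0=\sum_{i:(i,k)\in A'}y^{j}_{ik}$ for the request $j\in P^{3}$ and the vertices $k,k+n$ of the Type-1 request $k$; the right-hand family only constrains pairs whose request index lies in $P^{3}$ and whose vertex owner lies in $P^{1}$, so $\phi^{k}_{j+n}>\phi^{k}_{j}$ as you wrote it is not an instance of that family.

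The substantive issue is the one you flag yourself, and it is a genuine gap that you cannot close in the way you hope. The nested configuration $k\prec j\prec j+n\prec k+n$ (your labels: $k\in P^{1}$, $j\in P^{3}$) violates the left-hand family but satisfies every inequality of the right-hand family, and for a $P^{1}$--$P^{3}$ pair the only conflict constraint among \eqref{eq:LIFO}--\eqref{eq:Deadlock-a} that applies is \eqref{eq:CFI} itself; with $Q\ge 2$ and non-binding time windows the nested route satisfies all remaining constraints of PDP-TSLU. Hence the exclusion cannot be derived ``from the model's loading and precedence constraints'': as families of linear constraints on $(x,y)$ the two forms are \emph{not} interchangeable, and the claimed equivalence holds only if one additionally invokes the physical infeasibility of the nested loading pattern (a crossing container loaded after a same-side container can exit to neither side), which is exactly the information the constraint is meant to encode. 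You should also know that the paper's own ``$\Leftarrow$'' argument does not dispose of this case rigorously: it notes that request $k$ may be ``picked up and delivered'' or ``picked up but not delivered'' during the service of $j$ and then asserts that ``in certain circumstance'' a violation of the inequality ``is possible'' --- which shows the inequality is violated in one of the two cases, not in both. Your attempt therefore stalls at precisely the point where the published proof is also incomplete; a clean repair is either to state the equivalence modulo physically realizable loading orders, or to strengthen the right-hand family so that it also forbids the nested pattern.
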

\begin{proof}
The proofs of the two equivalences follow similar reasoning. For brevity, we only detail the proof for equivalence between the two CFI service constraints. We prove it by contradiction.

``$\Rightarrow$'':
It is sufficient to show that, given an arbitrary pair $j\in P^{3},\, k\in P^{1}$,
it must have $\sum_{i:(i,k+n)\in A'}y_{i,k+n}^{j}\le\sum_{i:(i,k)\in A'}y_{ik}^{j}$.
Suppose this is not true, \ie, there exists a pair $j\in P^{3},\, k\in P^{1}$ such that $\sum_{i:(i,k+n)\in A'}y_{i,k+n}^{j}>\sum_{i:(i,k)\in A'}y_{ik}^{j}$.
This implies that $\sum_{i:(i,k+n)\in A'}y_{i,k+n}^{j}=1$ and $\sum_{i:(i,k)\in A'}y_{ik}^{j}=0$,
which means that request $k$ is picked up before service of request
$j$ and delivered during service of request $j$. Equivalently, this
means that request $j$ is picked up during the service of request $k$, $\ie,$ $\sum_{i:\,(i,\, j)\in A'}y_{ij}^{k}=1$. This
is contradictory to the given condition and hence proves the sufficiency.

``$\Leftarrow$'': It is proved alike. Given an arbitrary pair
$j\in P^{1},\, k\in P^{3}$, suppose $\sum_{i:(i,k)\in A'}y_{ik}^{j}=1$.
Thus, $\sum_{i:(i,k)\in A'}y_{ik}^{j}\ge\sum_{i:(i,k+n)\in A'}y_{i,k+n}^{j}$,
which means that request $k$ can either
be picked up and delivered, or be picked up but not delivered during
the service of request $j$. This implies that in certain circumstance the request $j$ can be delivered but not picked up during the service of request $k$, \ie, it is possible to have $\sum_{i:(i,j+n)\in A'}y_{i,j+n}^{k}=1>0=\sum_{i:(i,j)\in A'}y_{i,j}^{k}$. This
is contradictory to the given condition and hence proves the necessity.\end{proof}

\subsection{Big-$M$ formulation of (\ref{eq:c - time consistency})} \label{apdix: big-M formulation}

The indicator constraint (\ref{eq:c - time consistency}) can
be linearized by using regular big-$M$ formulation as
\begin{equation}
\begin{aligned}
b_{j}\ge b_{i}+s_{i}+t_{ij}-\eta_{ij}(1-x_{ij}), \quad &  \forall  (i,j)\in A,\\
w_{j}\ge w_{i}+q_{j}-\rho_{ij}(1-x_{ij}), \quad &  \forall  (i,j)\in A,
\end{aligned}
\end{equation}
where $\eta_{ij}$ and $\rho_{ij}$ are large enough constants such
that the right hand sides of the inequalities are lower bounds of
$b_{j}$ and $w_{j}$, respectively. Specific lower bounds can be derived by using the constraints (\ref{eq:c - completion time}), (\ref{eq:c - time window}), (\ref{eq:c - load bound}) and (\ref{eq:c - b and w}), resulting in feasible $\eta_{ij}$ and $\rho_{ij}$
as summarized in Table \ref{tab:Specifications-of-eta-rho}, where
$l_{0}=q_{0}=q_{2n+1}\triangleq 0$. The detailed derivation is given below.

\begin{table}[H]
\caption{\footnotesize Feasible constant pairs $(\eta_{ij},\,\rho_{ij})$ ($\forall (i,j)\in A$)} \label{tab:Specifications-of-eta-rho}
\centering{}%
\footnotesize
\begin{tabular}{ccc}
\hline
\noalign{\vskip3pt}
 & $i\in P\cup\{0\}$ & $i\in D$\tabularnewline[3pt]
\hline
\noalign{\vskip3pt}
$j\in P$ & $(l_{i}-s_{i+n}-t_{i,i+n}+t_{ij},Q)$ & $(l_{i-n}+t_{ij},Q+q_{i})$\tabularnewline[3pt]
\noalign{\vskip3pt}
$j\in D\cup\{2n+1\}$ & $(l_{i}-s_{i+n}-t_{i,i+n}+t_{ij}-e_{j-n},Q+q_{j})$ & $(l_{i-n}+t_{ij}-e_{j-n},Q+q_{i}+q_{j})$\tabularnewline[3pt]
\hline
\end{tabular}
\end{table}

Valid reformations of the time and loading consistency constraints
in (\ref{eq:c - time consistency}) require that $\eta_{ij}\ge b_{i}+s_{i}+t_{ij}-b_{j}$
and $\rho_{ij}\ge w_{i}+q_{j}-w_{j}$ for any $(i,j)\in A$. It
is sufficient to set $\eta_{ij}$ and $\rho_{ij}$ as respective upper
bounds of $\underline{\eta}_{ij}\triangleq b_{i}+s_{i}+t_{ij}-b_{j}$
and $\underline{\rho}_{ij}\triangleq w_{i}+q_{j}-w_{j}$ for a given
$(i,j)$. Based on (\ref{eq:c - completion time}), (\ref{eq:c - time window}),
(\ref{eq:c - load bound}) and (\ref{eq:c - b and w}), such upper
bounds are derived for four cases of arcs as follows.

\emph{Case a}: $i\in P\cup\{0\}$, $j\in P$. It follows that $\underline{\eta}_{ij}\le b_{i+n}-t_{i,i+n}+t_{ij}-b_{j}\le l_{i}-s_{i+n}-t_{i,i+n}+t_{ij}$,
and that $\underline{\rho}_{ij}\le w_{i}+q_{j}-q_{j}\le Q$.

\emph{Case b}: $i\in P\cup\{0\}$, $j\in D\cup\{2n+1\}$. It
follows that $\underline{\eta}_{ij}\le b_{i+n}-t_{i,i+n}+t_{ij}-b_{j}\le l_{i}-s_{i+n}-t_{i,i+n}+t_{ij}-e_{j-n}$,
and that $\underline{\rho}_{ij}\le w_{i}+q_{j}\le Q+q_{j}$.

\emph{Case c}: $i\in D$, $j\in P$. It follows that $\underline{\eta}_{ij}\le l_{i-n}+t_{ij}-b_{j}\le l_{i-n}+t_{ij}$,
and that $\underline{\rho}_{ij}\le w_{i}+q_{j}-q_{j}\le Q+q_{i}$.

\emph{Case d}: $i\in D$, $j\in D\cup\{2n+1\}$. It follows
that $\underline{\eta}_{ij}\le l_{i-n}+t_{ij}-b_{j}\le l_{i-n}+t_{ij}-e_{j-n}$,
and that $\underline{\rho}_{ij}\le w_{i}+q_{j}\le Q+q_{i}+q_{j}$.

\bibliographystyle{elsarticle-num}
\bibliography{RGV_routing}
}

\end{document}